\theoremstyle{definition} 
\newtheorem{thm}{Theorem}[section]
\newtheorem{prop}[thm]{Proposition}
\newtheorem{defn}[thm]{Definition}
\newtheorem{exa}[thm]{Example}
\newtheorem{rem}[subsection]{Remark}
\begin{document}

\title{The Jones polynomial and Its Limitations\\(Master's Thesis)}

\author{Daniel Amankwah \\ 
African Institute for mathematical Sciences \\
Mbour, Senegal}

\maketitle

\begin{abstract}
  
  This paper will be an exposition of the Kauffman bracket polynomial model of the Jones polynomial, tangle methods for computing the Jones polynomial, and the use of these methods to produce non-trivial links that cannot be detected by the Jones polynomial.

\textbf{Keywords}: Links, Bracket polynomial, Jones polynomial. \\

This paper is the text of the author's
Master's Thesis written at the African Institute for mathematical Science under the supervision of Professor Louis H. Kauffman and Dr. Amadou Tall.
\end{abstract}

\section{Introduction}

In knot theory, the first polynomial invariant to be discovered was the Alexander polynomial by J.W. Alexander\cite{a6}. It was until $1985$ the New Zealand mathematician, Vaughan Jones announced a new invariant for knots and links called the Jones polynomial\cite{a5,a10}. He had noticed some relations in the field of operator theory that appeared similar to relations among knots. The Jones polynomial as an advantage over the Alexander is able to distinguish between a knot or link and it's mirror image   
 (i.e. detects chirality) and importantly it is able to detect more accurately whether a knot is knotted or a link is linked. A fundamental open problem in knot theory specifically the theory of Jones polynomial is as to whether there exists a non-trivial knot whose Jones polynomial is the same as the unknot. In this project we follow the method in \cite{a2} by Shalom Eliahou, Louis H. Kauffman and Morwen B. Thistlethwaite with more detail to construct links whose linking is not detectable by the Jones polynomial.

In section $2$, we discuss briefly some elementary definitions and notions in knot theory including some common knots invariants. We give an exposition to the Kauffman bracket model of the Jones polynomial in section $3$ and also discuss how the bracket applies to double stranded knots and links. In section $4$ we introduce some definitions and notations in tangle theory and apply tangle calculus in proving some relevant propositions associated with our method. We conclude by finally describing the method with an example to illustrate an infinite sequence of a 2-component link whose linking is not detectable by the Jones polynomial and also showing how the Thistlethwaite's link discovered by Morwen B. Thistlethwaite in \cite{a3b} fits into our construction.

\section{Elementary Terminologies}
\label{sect:basics}

\subsection{Knots and Links}

\begin{defn}[]
\label{defn:jwt}
A \textbf{knot} is an isotopy class of an embedding of a circle in $\mathbb{R}^{3}$. We assume that knots can be given a piecewise linear representation.
\end{defn}

\begin{defn}[]
\label{defn:jwt}
An \textbf{isotopy} from a knot or link $K$ to $K'$ is a family of embedding $K_{t}$, $0 \leq t \leq 1$ such that $K_{0}= K$ and $K_{1}=K'$.
\end{defn}

\begin{defn}[]
\label{defn:jwt}
Two knots $K$ and $K'$ are equivalent if there exists an orientation preserving homeomorphism, $h:\mathbb{R}^{3} \longrightarrow \mathbb{R}^{3}$ such that $h(K)=K'$. Here $K,K' \in \mathbb{R}^{3}$ are embedding of circle into the three-sphere $S^{3}$.
\end{defn}
It is a fact that two piecewise linear knots are isotopic if and only if they are equivalent. (See \cite{b1,b5})

\begin{defn}[]
\label{defn:jwt}
A \textbf{link} is the nonempty union of a finite number of disjoint knots.
\end{defn}

Since the components in a link are disjoint, the knots in a link are the connected components of that link. In particular, a knot is a special case of a link with one component. Some well-known links are shown in Figure ~\ref{eglink}.

\begin{figure}[!ht]
    \subfloat[hopf link\label{subfig-1:dummy}]{%
      \includegraphics[scale=0.09]{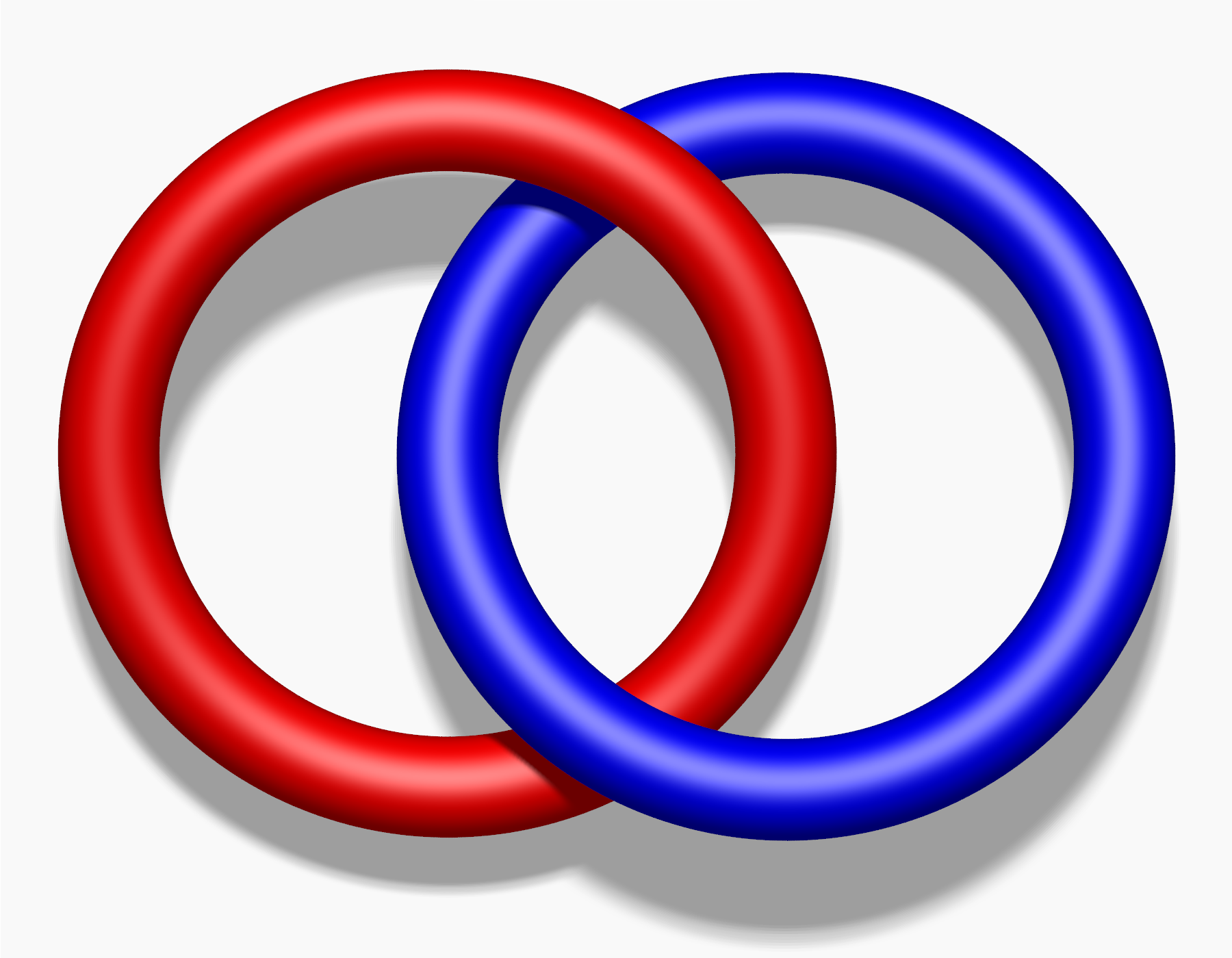}
    }
    \hfill
    \subfloat[Borromean rings\label{subfig-2:dummy}]{%
      \includegraphics[scale=0.08]{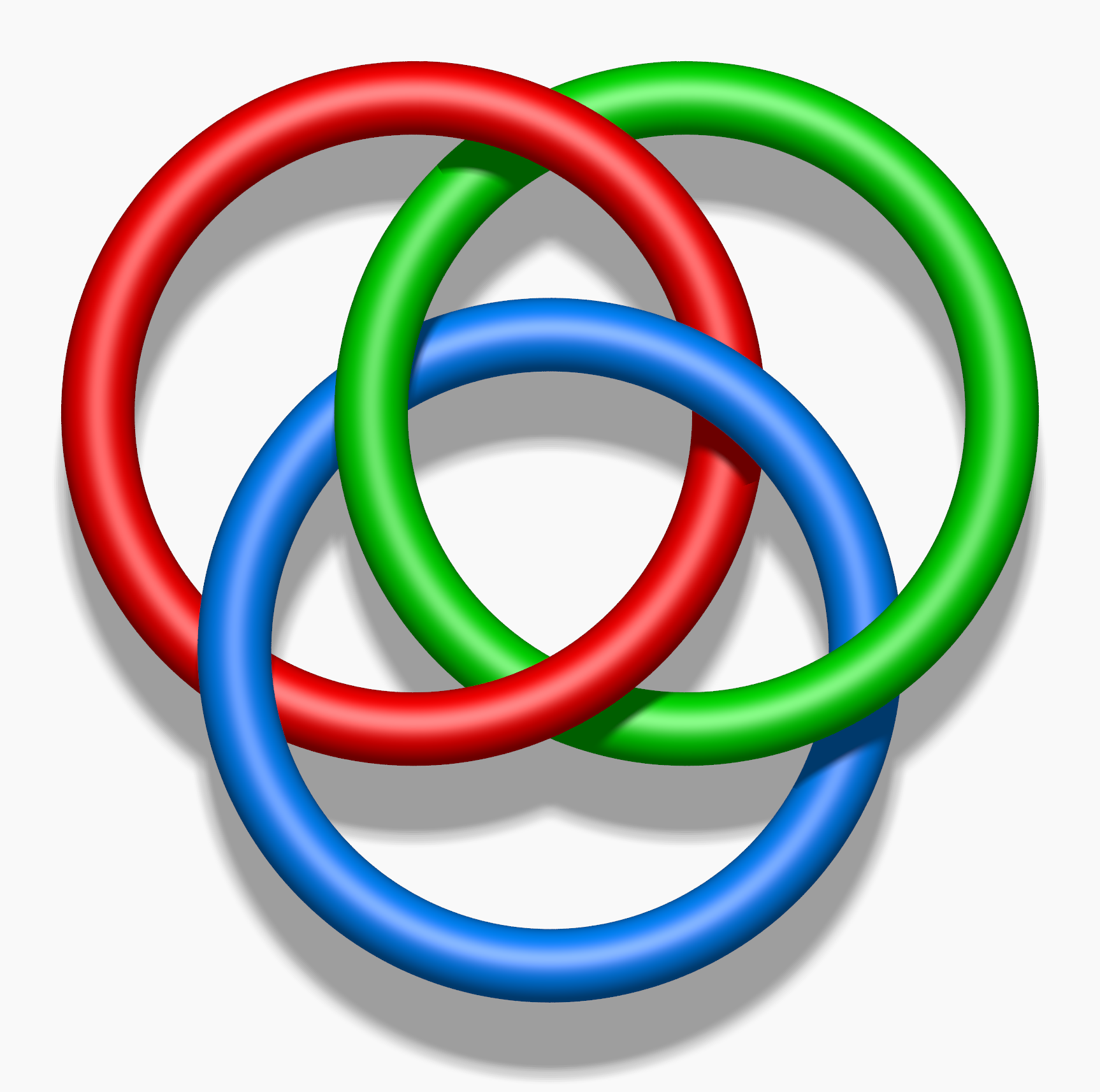}
    }
     \hfill
    \subfloat[Whitehead link\label{subfig-2:dummy}]{%
      \includegraphics[scale=0.6]{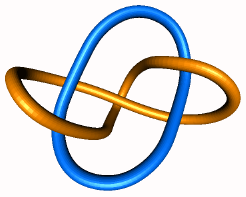}
    }
    \caption{Examples of link}
    \label{eglink}
\end{figure}

A useful way to visualize and manipulate a knot is to project it onto a plane.Think of the knot casting a shadow on that plane. A small change in the direction of projection will ensure that it is one-to-one except at the double points, called \textbf{crossings} where the "shadow" of the knot crosses itself once transversely. At each crossing, to be able to recreate the original knot, the over-strand must be distinguished from the under-strand. This is often done by creating a break in the strand going underneath. The resulting diagram is an immersed plane curve with the additional data of which strand is over and which strand is under at each crossing. These diagrams are called \textbf{knot diagrams} when they represent a knot and \textbf{link diagrams} when they represent a link.

\begin{defn}[]
\label{defn:jwt}
A \textbf{trivial knot} is a knot that is equivalent to a circle in a plane. A \textbf{trivial link} is a link that is equivalent to the union of disjoint circles lying in a plane.
\end{defn}

\subsection{Reidemeister moves}

\begin{defn}[]
\label{defn:jwt}
Consider a triangle $ABC$ with side $AC$ matching one of the line segments of a knot.In the plane determined by the triangle, we require that the region bounded $ABC$ intersects $K$ only in the edge $AC$. A \textbf{triangular detour} involves replacing the edge $AC$ of the knot $K$ with the two edges $AB$ and $BC$ to produce a new knot $L$. With the same notation, a \textbf{triangle shortcut} involves replacing the two edges $AB$ and $BC$ of knot $L$ with the single edge $AC$ to produce knot $K$. A \textbf{triangular move} is either a triangular detour or a triangular shortcut.
\end{defn}

\begin{figure}[!ht]
    \subfloat[$K$\label{subfig-1:dummy}]{%
      \includegraphics[scale=0.4]{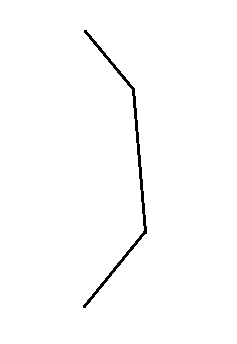}
    }
    \hfill
    \subfloat[A triangle on a line segment of the knot $K$\label{subfig-2:dummy}]{%
      \includegraphics[scale=0.4]{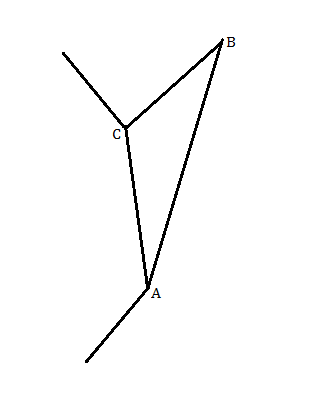}
    }
     \hfill
    \subfloat[$L$\label{subfig-2:dummy}]{%
      \includegraphics[scale=0.4]{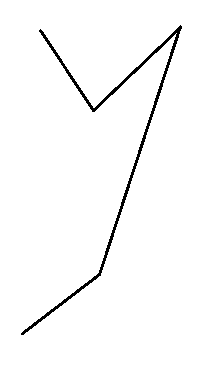}
    }
    \caption{A triangular detour on $K$; a triangular shortcut on knot $L$}
    \label{trimoves}
\end{figure}

Although a triangular move is a simple enough step in transforming one knot into an equivalent knot, such a change in a knot can significantly alter the diagram of the knot. We would like to consider triangular moves that only affect the knot diagram one vertex or one edge at a time. This enables us to analyze the effect of the move on some quantity we are trying to establish as a knot invariant.

In the 1920s Kurt Reidemester realized that the triangular moves can be broken down so the effect on the knot diagram is one of three simple modifications or their inverses. We designate the Reidemeister moves as Type $I$(add or remove a curl), Type $II$(remove or add two consecutive under cover crossings) and Type $III$(triangular move).

\begin{center}
\begin{enumerate}
\item[$I.$] $\raisebox{-1.0 em}{\includegraphics[scale=0.4]{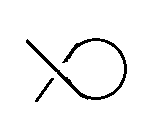}} \longleftrightarrow \raisebox{-0.8 em}{\includegraphics[scale=0.4]{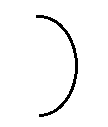}} \longleftrightarrow \raisebox{-0.8 em}{\includegraphics[scale=0.4]{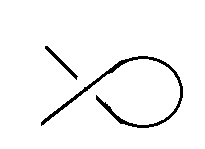}}$
\item[$II.$] $\raisebox{-0.8 em}{\includegraphics[scale=0.4]{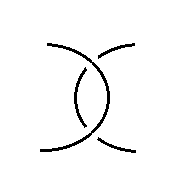}} \longleftrightarrow \raisebox{-0.8 em}{\includegraphics[scale=0.4]{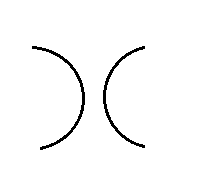}} \longleftrightarrow \raisebox{-1.2 em}{\includegraphics[scale=0.4]{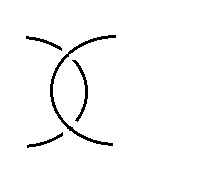}}$
\item[$III.$] $\raisebox{-0.8 em}{\includegraphics[scale=0.4]{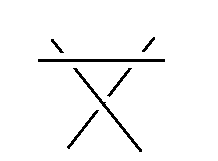}} \longleftrightarrow \raisebox{-0.8 em}{\includegraphics[scale=0.4]{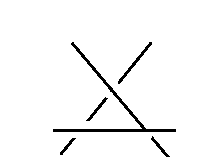}}$
\end{enumerate}
\end{center}

Reidemeister proved in the 1920s that these three moves(in conjunction with planar topological equivalence of the underlying universe) are sufficient to generate spatial isotopy. In other words Reidemeister proved that two knots(links) in space can be deformed into each other(ambient isotopy) if and only if their diagrams can be transformed into one another by planar isotopy and the three moves. By planar isotopy we mean a motion of the diagram in the plane that preserves the structure of the underlying universe.

\begin{defn}[]
\label{defn:jwt}
A \textbf{knot invariant} is a mathematical property or quantity associated with a knot that does not change as we perform Reidemeister moves on the knot.
\end{defn}

\begin{defn}[]
\label{defn:jwt}
An orientation of a link is a choice of direction to travel around each component of the link.Consider a crossing in a regular projection of an oriented link. Stand on the overpass and face in the direction of the orientation. The crossing is \textbf{right-handed} if and only if traffic on the underpass goes from right to left; the crossing is \textbf{left-handed} if and only if traffic on the underpass goes from left to right.In a regular projection of an oriented link of two components, we assign \textbf{$+1$} to right-handed crossing and \textbf{$-1$} to left-handed crossing.
\end{defn}

\begin{figure}[!h]
\centering 
\includegraphics[scale=0.8]{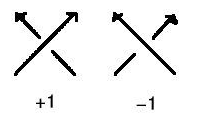}
\caption{Crossing signs}
\label{bandwidth} 
\end{figure}

Given an oriented link of $n$ components, one half of the sum of the signs of crossing with the exemption of self crossings is termed the \textbf{total linking number}.

Let $\alpha$ and $\beta$ be two components of a link. Denoting the set of crossing of the component $\alpha$ with the component $\beta$ by $\alpha \sqcap \beta$(thus $\alpha \sqcap \beta$ does not include self crossing of $\alpha$ or of $\beta$). Then the linking number of $\alpha$ and $\beta$ is defined by the formula: 

\begin{equation}
lk(\alpha,\beta)=\frac{1}{2}\sum_{p \in \alpha \sqcap \beta}\varepsilon(p).
\end{equation}
where  \[ \varepsilon(p)=  \left\{
\begin{array}{ll}
      +1 & p = \raisebox{-0.7 em}{\includegraphics[scale=0.3]{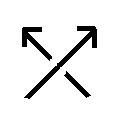}} \\
      -1 & p = \raisebox{-0.7 em}{\includegraphics[scale=0.3]{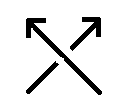}} \\
\end{array} 
\right. \]

\begin{exa}[\cite{a3}]
\label{exa:jwt}
The linking number of the Whitehead link(Figure ~\ref{whitehead}) below can be calculated as;
\end{exa}

\begin{figure}[!h]
\centering 
\includegraphics[scale=0.8]{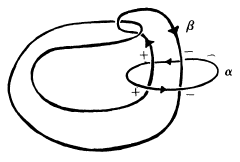}
\caption{}
\label{whitehead} 
\end{figure}

$$lk(\alpha,\beta)=\frac{1}{2}(1+1-1-1)=0$$

The above example gives the linking number of the Whitehead link, and even though it has linking number zero, it is linked.

\begin{rem}[]
\label{rem:jwt}
The linking number is an invariant of an oriented link of two components. We can check this by how Reidemeister moves affect the linking number.

A move of type $I$ generates a new crossing of a link component with itself. These are ignored in computing the linking number. Thus there is no change to this quantity.

A move of type $II$ generates two crossings, both involving the same component or components of the link. One crossing will be assigned $+1$ and the other will be assigned $-1$.Thus even if the crossing involve both components of the link, the sum of the numbers will not change.

A move of type $III$ merely alters the relative positions of the three crossing without changing the components or orientation involved. Hence, there is no change to the linking number
\end{rem}

\subsection{Alexander Polynomial}
In 1928, J.W Alexander introduced a polynomial invariant that is straight forward to compute,has a moderately easy proof and yet is fairly good at distinguishing among different knots. In his approach, the knot diagram gives entries of a matrix whose determinant is the \textbf{Alexander Polynomial}\cite{a6}. Alexander determined the entries of a matrix from the crossings and arcs of the diagram.

At each crossing of the knot diagram, the breaks in the lower strand divide the curve into arcs that begin and end at crossing. Since each arc has two ends and each crossing involves the ends of the arcs, the number of arcs is equal to the number of crossing. In our notation, we index the crossing $x_{1},x_{2},...,x_{n}$ and index the arcs $a_{1},a_{2},...,a_{n}$. Next we choose an orientation of the knot. As you travel around the knot, stand on the overpass of each crossing and face in the direction of the orientation. Label the three strands of the knot diagram that form this crossing: put $1-t$ on the overpass itself, label the end of the underpass arc on your left $t$ and label the end of the underpass arc on your right with -1. We then form an $n \times n$ matrix by writing the label on arc $a_{j}$ at crossing $x_{i}$ as $ij$-entry of the matrix. If an arc has more than one of its labels at the crossing, put the sum of the labels at the entry of the matrix. If the arc is not involved in forming the crossing, put a $0$ as the entry. This matrix is the crossing/arc matrix of the knot diagram. Next we delete one column and one row of the matrix and compute the determinant of the resulting $(n-1)\times(n-1)$ matrix. This will be a polynomial in the variable $t$ which depends on quite a few choices: the index system for the crossing and arcs, the orientation of the knot, the selection of a row and column to eliminate, to say nothing of the choice of the knot diagram. Alexander's amazing discovery is that although these choices may produce different polynomials, any one of them will be plus or minus a power of $t$ times any other. Thus if we normalize the polynomial to have a positive constant term, the resulting Alexander Polynomial will be a knot invariant.

In \cite{b2} one can find the proof showing that the Alexander polynomial does not depend on the indexing of crossings and arcs. It does not also depend on the row and column eliminated from the crossing/arc matrix. In the same text one can see how to use the Skein relation discovered by John Conway in $1969$ to compute the Alexander polynomial. The Alexander polynomial is invariant up to multiplication by $\pm t^{N}$ where $N$ is some integer.

\begin{exa}[\cite{b2}]
\label{exa:jwt}

\end{exa}

\begin{figure}[!h]
\centering 
\includegraphics[scale=0.8]{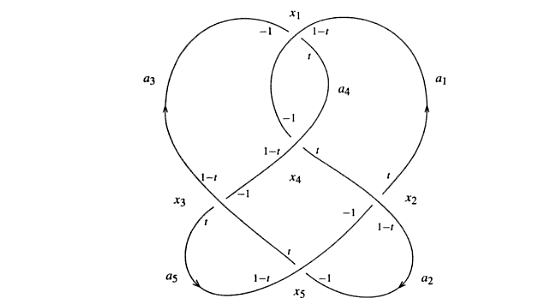}
\caption{}
\label{Alex} 
\end{figure}

The crossing/arc matrix of the above oriented knot diagram(Figure ~\ref{Alex} ) is given as:
 
 \begin{center}
$\begin{pmatrix}
1-t & 0 & -1 & t & 0 \\ 
t & 1-t & 0 & 0 & -1 \\ 
0 & 0 & 1-t & -1 & t \\ 
-1 & t & 0 & 1-t & 0 \\ 
0 & -1 & t & 0 & 1-t
\end{pmatrix}$ 
\end{center}

We then eliminate the fourth row and the last column and compute the determinant of the resulting $4 \times 4$ matrix as:

\begin{eqnarray*}
det\begin{pmatrix}
1-t & 0 & -1 & t \\ 
t & 1-t & 0 & 0 \\ 
0 & 0 & 1-t & -1 \\ 
0 & -1 & t & 0
\end{pmatrix}   & = &-t det \begin{pmatrix}
0 & -1 & t  \\ 
0 & 1-t & -1 \\ 
-1 & t & 0  
\end{pmatrix} +(1-t)det \begin{pmatrix}
1-t & 0 & 0  \\ 
0 & 1-t & -1 \\ 
-1 & t & 0  
\end{pmatrix}\\ & = &-t(t-1-t^{2})+t(1-t)^{2}  =  2t^{3}-3t^{2}+2t
\end{eqnarray*}

Finally we multiply the resulting determinant by $t^{-1}$ to normalize this polynomial in order to have a positive constant term. Thus \textbf{$2t^{2}-3t+2$} is the Alexander polynomial of this knot.

\section{The Kauffman Bracket Polynomial Model of the Jones Polynomial}
\label{sect:cross-ref}

In this chapter, we focus on the definition of the Jones polynomial as constructed by Louis Kauffman in terms of his bracket polynomial. The bracket polynomial is in general a Laurent polynomial since it contains terms with negative exponent.

\subsection{The Rules for the Kauffman Bracket Polynomial}

The Kauffman bracket polynomial of a regular projection of a link, say $L$ is denoted by $\langle L \rangle$ and $\langle L \rangle \in \mathbb{Z}[A,A^{-1}]$   \cite{a3}.
It satisfies the following rules;
\begin{enumerate}
\item The bracket of a diagram consisting of $n$ disjoint simple closed curves in the plane is $\delta^{n-1}$, where $\delta = -A^{-2}-A^{2}$.

\item $\left \langle \raisebox{-0.6 em}{\includegraphics[scale=0.3]{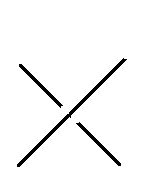}} \right \rangle = A\left \langle \raisebox{-0.6 em}{\includegraphics[scale=0.3]{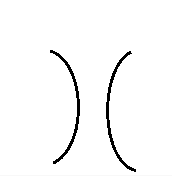}} \right \rangle + A^{-1}\left \langle \raisebox{-0.6 em}{\includegraphics[scale=0.3]{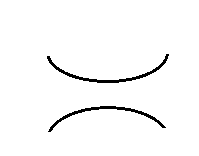}} \right \rangle$

$\left \langle \raisebox{-0.6 em}{\includegraphics[scale=0.3]{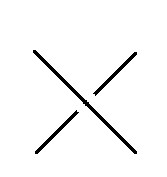}} \right \rangle = A\left \langle \raisebox{-0.6 em}{\includegraphics[scale=0.3]{t21.PNG}} \right \rangle + A^{-1}\left \langle \raisebox{-0.6 em}{\includegraphics[scale=0.3]{t24.PNG}} \right \rangle$

\item $\left \langle L \sqcup O \right \rangle = \delta \left \langle L \right \rangle $, where $O$ is a simple closed curve.
\end{enumerate}

A useful consequence of rule $2$ is the following switching formula;
\begin{equation}\label{switching}
A\left \langle \raisebox{-0.6 em}{\includegraphics[scale=0.3]{t56.PNG}} \right \rangle - A^{-1} \left \langle \raisebox{-0.6 em}{\includegraphics[scale=0.3]{t55.PNG}} \right \rangle =(A^{2}-A^{-2})\left \langle \raisebox{-0.6 em}{\includegraphics[scale=0.3]{t21.PNG}} \right \rangle
\end{equation}
This formula is useful since some computations will simplify quite quickly with the proper choices of switching and smoothing.

At this point we check the invariance of the bracket polynomial using the Reidemeister moves. We first consider the type II Reidemeister move;
\begin{eqnarray*}
\left \langle  \raisebox{-0.6 em}{\includegraphics[scale=0.4]{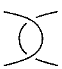}} \right \rangle   & = & A \left \langle  \raisebox{-0.6 em}{\includegraphics[scale=0.4]{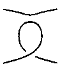}} \right \rangle + A^{-1} \left \langle  \raisebox{-0.6 em}{\includegraphics[scale=0.4]{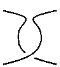}} \right \rangle \\
& = & A \left[A \left \langle  \raisebox{-0.6 em}{\includegraphics[scale=0.4]{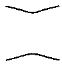}} \right \rangle + A^{-1} \left \langle  \raisebox{-0.6 em}{\includegraphics[scale=0.4]{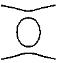}} \right \rangle \right] + A^{-1} \left[A \left \langle  \raisebox{-0.6 em}{\includegraphics[scale=0.4]{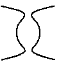}} \right \rangle + A^{-1} \left \langle  \raisebox{-0.6 em}{\includegraphics[scale=0.4]{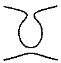}} \right \rangle \right]\\
& = & A^{2} \left \langle  \raisebox{-0.6 em}{\includegraphics[scale=0.4]{j11.PNG}} \right \rangle + \left \langle  \raisebox{-0.6 em}{\includegraphics[scale=0.4]{j8.PNG}} \right \rangle + \left \langle  \raisebox{-0.6 em}{\includegraphics[scale=0.4]{j9.PNG}} \right \rangle + A^{-2} \left \langle  \raisebox{-0.6 em}{\includegraphics[scale=0.4]{j10.PNG}} \right \rangle\\
& = & A^{2} \left \langle  \raisebox{-0.6 em}{\includegraphics[scale=0.4]{j11.PNG}} \right \rangle +(-A^{2}-A^{-2}) \left \langle  \raisebox{-0.6 em}{\includegraphics[scale=0.4]{j11.PNG}} \right \rangle + \left \langle  \raisebox{-0.6 em}{\includegraphics[scale=0.4]{j9.PNG}} \right \rangle + A^{-2} \left \langle  \raisebox{-0.6 em}{\includegraphics[scale=0.4]{j10.PNG}} \right \rangle\\
& = & (A^{2}-A^{2}-A^{-2}+A^{-2})\left \langle  \raisebox{-0.6 em}{\includegraphics[scale=0.4]{j11.PNG}} \right \rangle + \left \langle \raisebox{-0.6 em}{\includegraphics[scale=0.25]{t24.PNG}} \right \rangle\\
& = & \left \langle \raisebox{-0.6 em}{\includegraphics[scale=0.25]{t24.PNG}} \right \rangle
\end{eqnarray*}

Checking the invariance under type III moves is easy since we can use the invariance under type II moves to slide some arcs around a little.
\begin{eqnarray*}
\left \langle \raisebox{-0.6 em}{\includegraphics[scale=0.4]{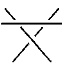}} \right \rangle & = & A \left \langle \raisebox{-0.6 em}{\includegraphics[scale=0.4]{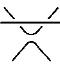}} \right \rangle + A^{-1} \left \langle \raisebox{-0.6 em}{\includegraphics[scale=0.4]{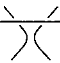}} \right \rangle \\
& = &  A \left \langle \raisebox{-0.6 em}{\includegraphics[scale=0.4]{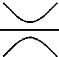}} \right \rangle + A^{-1} \left \langle \raisebox{-0.6 em}{\includegraphics[scale=0.4]{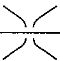}} \right \rangle \\
& = &  A \left \langle \raisebox{-0.6 em}{\includegraphics[scale=0.4]{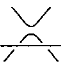}} \right \rangle + A^{-1} \left \langle \raisebox{-0.6 em}{\includegraphics[scale=0.4]{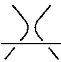}} \right \rangle \\
& = & \left \langle \raisebox{-0.6 em}{\includegraphics[scale=0.4]{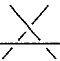}} \right \rangle \\
\end{eqnarray*}

\begin{defn}[]
\label{defn:jwt}
Two knot or link diagrams are defined to be \textbf{regular isotopic} if one can be obtained from the other by a combination of planar isotopy and the second and third Reidemeister moves.
\end{defn}

The bracket polynomial is therefore a regular isotopy invariant as it is not invariant under type I Reidemeister moves. This can be seen as follows;

For the curl with a right-handed crossing we have,
\begin{eqnarray*}
\left \langle \raisebox{-0.6 em}{\includegraphics[scale=0.4]{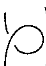}} \right \rangle & = & A \left \langle \raisebox{-0.6 em}{\includegraphics[scale=0.4]{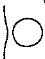}} \right \rangle + A^{-1} \left \langle \raisebox{-0.6 em}{\includegraphics[scale=0.4]{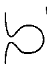}} \right \rangle \\
& = & A(-A^{-2}-A^{2}) \left \langle \raisebox{-0.6 em}{\includegraphics[scale=0.4]{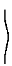}} \right \rangle + A^{-1} \left \langle \raisebox{-0.6 em}{\includegraphics[scale=0.4]{j3.PNG}} \right \rangle \\
& = & (-A^{3}-A^{-1}+A^{-1})\left \langle \raisebox{-0.6 em}{\includegraphics[scale=0.4]{j4.PNG}} \right \rangle \\
& = & -A^{3} \left \langle \raisebox{-0.6 em}{\includegraphics[scale=0.4]{j4.PNG}} \right \rangle 
\end{eqnarray*}

and for the curl with a left-handed crossing we have,
\begin{eqnarray*}
\left \langle \raisebox{-0.6 em}{\includegraphics[scale=0.4]{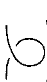}} \right \rangle & = & A \left \langle \raisebox{-0.6 em}{\includegraphics[scale=0.4]{j3.PNG}} \right \rangle + A^{-1} \left \langle \raisebox{-0.6 em}{\includegraphics[scale=0.4]{j2.PNG}} \right \rangle \\
& = & A \left \langle \raisebox{-0.6 em}{\includegraphics[scale=0.4]{j3.PNG}} \right \rangle + A^{-1}(-A^{-2}-A^{2})  \left \langle \raisebox{-0.6 em}{\includegraphics[scale=0.4]{j4.PNG}} \right \rangle \\
& = & (A-A-A^{-3})\left \langle \raisebox{-0.6 em}{\includegraphics[scale=0.4]{j4.PNG}} \right \rangle \\
& = & -A^{-3} \left \langle \raisebox{-0.6 em}{\includegraphics[scale=0.4]{j4.PNG}} \right \rangle 
\end{eqnarray*}

\begin{exa}[]
\label{exa:jwt}
The first diagram in Figure  ~\ref{trefoil}  indicated as $K$ is the right-handed trefoil. We wish to compute its bracket polynomial.
\end{exa}
\begin{figure}[!ht]
    \subfloat[$K$\label{subfig-1:dummy}]{%
    \includegraphics[scale=0.8]{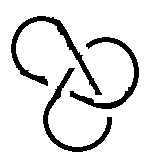}
    }
    \hfill
    \subfloat[$K'$\label{subfig-2:dummy}]{%
      \includegraphics[scale=0.8]{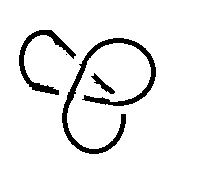}
    }
    \hfill
    \subfloat[$K''$\label{subfig-2:dummy}]{%
      \includegraphics[scale=0.8]{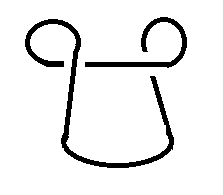}
    }
    \caption{}
    \label{trefoil}
  \end{figure}
  
By switching and smoothening one of its crossings, we can write a switching formula as in Equation ~\eqref{switching},
\begin{equation}
A^{-1}\langle K \rangle -A \langle K' \rangle=(A^{-2}-A^{2})\langle K'' \rangle
\end{equation}
Applying regular isotopy to the diagram $K'$ we obtain an unknot with a right-handed crossing, so then $\langle K' \rangle = -A^{3}$. Similarly the diagram $K''$ is an unknot with two left-handed crossings so then $\langle K'' \rangle = (-A^{-3})^{2}= A^{-6}$.We therefore obtain,
\begin{equation*}
A^{-1}\langle K \rangle -A(-A^{3})=(A^{-2}-A^{2})(A^{-6})
\end{equation*}
Hence
\begin{equation*}
A^{-1}\langle K \rangle =A^{-8}-A^{-4}-A^{4}
\end{equation*}
So
\begin{equation*}
\langle K \rangle = A^{-7}-A^{-3}-A^{5}
\end{equation*}

\begin{defn}[]
\label{defn:jwt}
The \textbf{writhe}, $w(L)$ of the regular projection $L$ of a link is the number of right-handed crossings minus the number of left-handed crossings.
\end{defn}

As we have seen, a type $I$ move that eliminates a right-handed crossing will introduce a factor $-A^{3}$ into the bracket and a type $I$ move that eliminates a left-handed crossing will introduce a factor  $-A^{-3}$ into the bracket.

The bracket polynomial of the regular projection $L$ of a link can thus be normalized by multiplying by the factor  $(-A)^{-3w(L)}$. The normalized bracket polynomial denoted by $X_{L}$ is therefore written as;
\begin{equation}
X_{L}=(-A)^{-3w(L)}\langle L \rangle
\end{equation}

Let $L$ and $L'$ differ by a single first Reidemeister move,
\begin{eqnarray*}
X_{L} & = & (-A)^{-3w(L)}\langle L \rangle \\
      & = & (-A)^{-3(w(L')+1)}(-A^{3})\langle L' \rangle\\
      & = & (-A)^{-3w(L')} \cdot (-A^{-3}) \cdot (-A^{3}) \langle L' \rangle \\
      & = & (-A)^{-3w(L')}\langle L' \rangle \\
      & = & X_{L'}
\end{eqnarray*}

We have shown that the bracket polynomial is invariant under type $II$ and type $III$ Reidemeister moves. Also by direct verification we can as well see that the writhe is invariant under type II and type III Reidemeister moves. We can therefore conclude that the normalized bracket polynomial is a knot invariant.

\begin{defn}[]
\label{defn:jwt}
The \textbf{Jones polynomial} $V_{L}$ of a link $L$ is obtained by making the substitution $A=t^{-\frac{1}{4}}$ in the polynomial $X_{L}$.
\end{defn}

The writhe of the trefoil diagram, $K$ in Figure  ~\ref{trefoil}  is $+3$ so then its normalized bracket polynomial is given as,
\begin{eqnarray*}
X_{K}(A) & = & (-A)^{-3(3)}(A^{-7}-A^{-3}-A^{5})\\
      & = & -A^{-16}+A^{-12}+A^{-4}
\end{eqnarray*}

The Jones polynomial of the right-handed trefoil is therefore
\begin{equation}
V_{K}(t)=X_{K}(t^{-\frac{1}{4}})= -t^{4}+t^{3}+t
\end{equation}

\subsection{Calculating the Bracket Polynomial of Double Stranded Knots and Links.}

In this section, we apply the basic bracket expansion formula to create a switching formula for calculating double stranded links and doubles of knots.(refer to \cite{b6,a12})

By expanding on all four vertices, one finds the following formula for the bracket invariant;

\begin{eqnarray*}
\left \langle \raisebox{-0.6 em}{\includegraphics[scale=0.2]{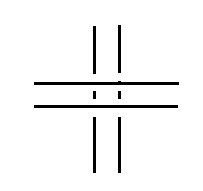}} \right \rangle & = & A^{4} \left \langle \raisebox{-0.6 em}{\includegraphics[scale=0.2]{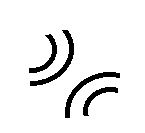}} \right \rangle + A^{-4}\left \langle \raisebox{-0.6 em}{\includegraphics[scale=0.2]{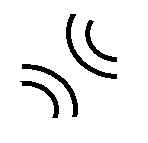}} \right \rangle + A^{2}\left[\left \langle \raisebox{-0.6 em}{\includegraphics[scale=0.2]{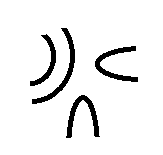}} \right \rangle + \left \langle \raisebox{-0.6 em}{\includegraphics[scale=0.2]{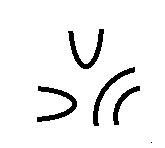}} \right \rangle \right] + A^{-2}\left[\left \langle \raisebox{-0.6 em}{\includegraphics[scale=0.2]{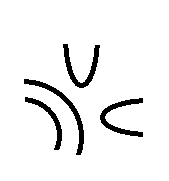}} \right \rangle + \left \langle \raisebox{-0.6 em}{\includegraphics[scale=0.2]{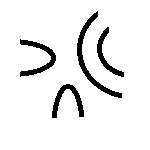}}\right \rangle \right] \\
 & + & (A^{2}+A^{-2})\left \langle \raisebox{-0.6 em}{\includegraphics[scale=0.2]{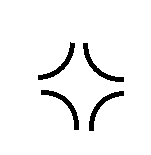}} \right \rangle + A^{0}\left[\left \langle \raisebox{-0.6 em}{\includegraphics[scale=0.2]{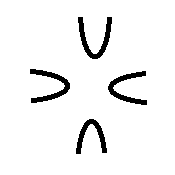}} \right \rangle + \left \langle \raisebox{-0.6 em}{\includegraphics[scale=0.2]{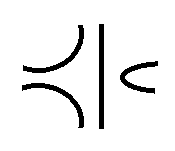}} \right \rangle + \left \langle \raisebox{-0.6 em}{\includegraphics[scale=0.2]{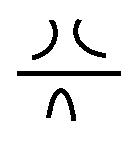}}\right \rangle + \left \langle \raisebox{-0.6 em}{\includegraphics[scale=0.2]{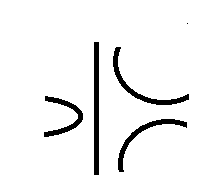}} \right \rangle + \left \langle \raisebox{-0.6 em}{\includegraphics[scale=0.2]{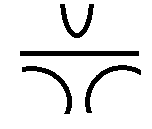}} \right \rangle \right]
\end{eqnarray*}

\begin{eqnarray*}
\left \langle \raisebox{-0.6 em}{\includegraphics[scale=0.2]{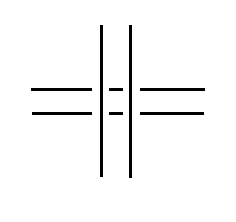}} \right \rangle & = & A^{4} \left \langle \raisebox{-0.6 em}{\includegraphics[scale=0.2]{s10.PNG}} \right \rangle + A^{-4}\left \langle \raisebox{-0.6 em}{\includegraphics[scale=0.2]{s6.PNG}} \right \rangle + A^{2}\left[\left \langle \raisebox{-0.6 em}{\includegraphics[scale=0.2]{s18.PNG}} \right \rangle + \left \langle \raisebox{-0.6 em}{\includegraphics[scale=0.2]{s21.PNG}} \right \rangle \right] + A^{-2}\left[\left \langle \raisebox{-0.6 em}{\includegraphics[scale=0.2]{s7.PNG}} \right \rangle + \left \langle \raisebox{-0.6 em}{\includegraphics[scale=0.2]{s11.PNG}} \right \rangle\right] \\
 & + & (A^{2}+A^{-2})\left \langle \raisebox{-0.6 em}{\includegraphics[scale=0.2]{s24.PNG}} \right \rangle + A^{0}\left[\left \langle \raisebox{-0.6 em}{\includegraphics[scale=0.2]{s8.PNG}} \right \rangle + \left \langle \raisebox{-0.6 em}{\includegraphics[scale=0.2]{s19.PNG}} \right \rangle + \left \langle \raisebox{-0.6 em}{\includegraphics[scale=0.2]{s12.PNG}} \right \rangle + \left \langle \raisebox{-0.6 em}{\includegraphics[scale=0.2]{s22.PNG}} \right \rangle + \left \langle \raisebox{-0.6 em}{\includegraphics[scale=0.2]{s25.PNG}}\right \rangle \right]
\end{eqnarray*}

The above two expansion formula immediately leads to the switching identity;

\begin{eqnarray*}
\left \langle  \raisebox{-0.6 em}{\includegraphics[scale=0.2]{t50.PNG}}\right \rangle - \left \langle  \raisebox{-0.6 em}{\includegraphics[scale=0.2]{t51.PNG}} \right \rangle & = & (A^{4}-A^{-4})\left[\left \langle  \raisebox{-0.6 em}{\includegraphics[scale=0.2]{s6.PNG}} \right \rangle -\left \langle \raisebox{-0.6 em}{\includegraphics[scale=0.2]{s10.PNG}} \right \rangle \right]+ (A^{2}-A^{-2})  [\left \langle \raisebox{-0.6 em}{\includegraphics[scale=0.2]{s7.PNG}} \right \rangle + \left \langle \raisebox{-0.6 em}{\includegraphics[scale=0.2]{s11.PNG}} \right \rangle\\
                                                    & - & \left \langle \raisebox{-0.6 em}{\includegraphics[scale=0.2]{s18.PNG}}\right \rangle - \left \langle \raisebox{-0.6 em}{\includegraphics[scale=0.2]{s21.PNG}} \right \rangle ]
\end{eqnarray*}

This identity will help us to compute the bracket polynomial and eventually the Jones polynomial of a double stranded link.

In other to do the calculation with much ease, we adapt a short hand notation of the above formula.We have that,
\begin{equation}
\left \langle \raisebox{-0.4 em}{\includegraphics[scale=0.3]{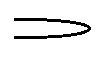}} \right \rangle = \left \langle \left \langle \raisebox{-0.4 em}{\includegraphics[scale=0.4]{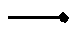}} \right \rangle \right \rangle
\end{equation}
and
\begin{equation}
\left \langle \raisebox{-0.6 em}{\includegraphics[scale=0.3]{s24.PNG}}\right \rangle =  \left \langle \left \langle \raisebox{-0.6 em}{\includegraphics[scale=0.3]{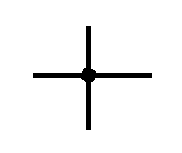}}\right \rangle \right \rangle
\end{equation}
while
\begin{equation}
\left \langle \raisebox{-0.6 em}{\includegraphics[scale=0.2]{s8.PNG}} \right \rangle =  \left \langle \left \langle \raisebox{-0.6 em}{\includegraphics[scale=0.3]{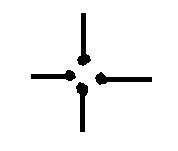}} \right \rangle \right \rangle
\end{equation}

Our expansion formula will thus be written as;
\begin{eqnarray*}
 \left \langle \left \langle \raisebox{-0.6 em}{\includegraphics[scale=0.2]{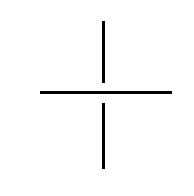}}\right \rangle \right \rangle & = & A^{4}  \left \langle \left \langle \raisebox{-0.6 em}{\includegraphics[scale=0.3]{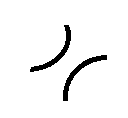}} \right \rangle \right \rangle + A^{-4} \left \langle \left \langle \raisebox{-0.6 em}{\includegraphics[scale=0.3]{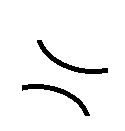}} \right \rangle \right \rangle + A^{2} \left[ \left \langle \left \langle \raisebox{-0.6 em}{\includegraphics[scale=0.3]{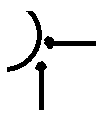}} \right \rangle \right \rangle +  \left \langle \left \langle \raisebox{-0.6 em}{\includegraphics[scale=0.3]{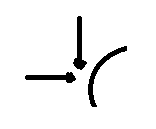}} \right \rangle \right \rangle \right]\\
                        & + & A^{-2}\left[ \left \langle \left \langle \raisebox{-0.6 em}{\includegraphics[scale=0.3]{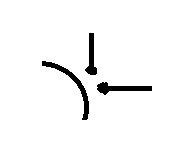}} \right \rangle \right \rangle +  \left \langle \left \langle \raisebox{-0.6 em}{\includegraphics[scale=0.3]{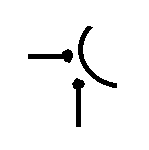}} \right \rangle \right \rangle \right] + (A^{2}+A^{-2}) \left \langle \left \langle \raisebox{-0.6 em}{\includegraphics[scale=0.3]{s68.PNG}} \right \rangle \right \rangle \\
                        & + & A^{0}\left[ \left \langle \left \langle \raisebox{-0.6 em}{\includegraphics[scale=0.25]{s16.PNG}} \right \rangle \right \rangle +  \left \langle \left \langle \raisebox{-0.6 em}{\includegraphics[scale=0.25]{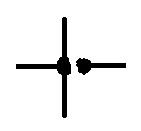}} \right \rangle \right \rangle +  \left \langle \left \langle \raisebox{-0.6 em}{\includegraphics[scale=0.25]{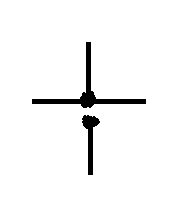}} \right \rangle \right \rangle +  \left \langle \left \langle \raisebox{-0.6 em}{\includegraphics[scale=0.25]{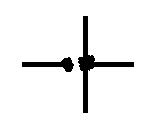}} \right \rangle \right \rangle +  \left \langle \left \langle \raisebox{-0.6 em}{\includegraphics[scale=0.25]{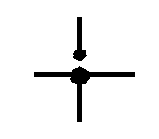}} \right \rangle \right \rangle \right]
\end{eqnarray*}

Noting for example that, 
\begin{equation}
 \left \langle \left \langle \raisebox{-0.6 em}{\includegraphics[scale=0.25]{s20.PNG}} \right \rangle \right \rangle = \left \langle \raisebox{-0.6 em}{\includegraphics[scale=0.25]{s19.PNG}} \right \rangle 
\end{equation}

\begin{eqnarray*}
 \left \langle \left \langle \raisebox{-0.6 em}{\includegraphics[scale=0.2]{t52.PNG}} \right \rangle \right \rangle -  \left \langle \left \langle \raisebox{-0.6 em}{\includegraphics[scale=0.2]{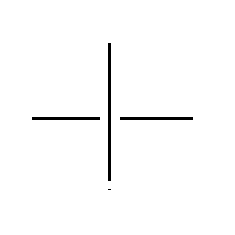}} \right \rangle \right \rangle & = & (A^{4}-A^{-4})\left[ \left \langle \left \langle \raisebox{-0.6 em}{\includegraphics[scale=0.2]{s1.PNG}} \right \rangle \right \rangle -  \left \langle \left \langle \raisebox{-0.6 em}{\includegraphics[scale=0.2]{s2.PNG}} \right \rangle \right \rangle \right]\\
                                       & + & (A^{2}-A^{-2})\left[ \left \langle \left \langle \raisebox{-0.6 em}{\includegraphics[scale=0.2]{s3.PNG}}\right \rangle \right \rangle +  \left \langle \left \langle \raisebox{-0.6 em}{\includegraphics[scale=0.2]{s5.PNG}} \right \rangle \right \rangle -  \left \langle \left \langle \raisebox{-0.6 em}{\includegraphics[scale=0.2]{s9.PNG}} \right \rangle \right \rangle -  \left \langle \left \langle \raisebox{-0.6 em}{\includegraphics[scale=0.2]{s4.PNG}} \right \rangle \right \rangle \right]
\end{eqnarray*}

Below are also some rules that can facilitate the calculation.
\begin{equation}
\left \langle \left \langle \raisebox{-0.3 em}{\includegraphics[scale=0.6]{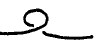}} \right \rangle \right \rangle = A^{8} \left \langle \left \langle \raisebox{-0.3 em}{\includegraphics[scale=0.4]{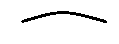}} \right \rangle \right \rangle + (A^{6}-A^{2}) \left \langle \left \langle \raisebox{-0.3 em}{\includegraphics[scale=0.4]{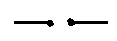}} \right \rangle \right \rangle
\end{equation}
\begin{equation}
\left \langle \left \langle \raisebox{-0.6 em}{\includegraphics[scale=0.6]{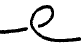}} \right \rangle \right \rangle = A^{-8} \left \langle \left \langle \raisebox{-0.3 em}{\includegraphics[scale=0.4]{s69.PNG}} \right \rangle \right \rangle + (A^{-6}-A^{-2}) \left \langle \left \langle \raisebox{-0.3 em}{\includegraphics[scale=0.4]{s67.PNG}} \right \rangle \right \rangle
\end{equation}
\begin{equation}
 \left \langle \left \langle \raisebox{-0.3 em}{\includegraphics[scale=0.6]{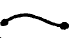}} \right \rangle \right \rangle = 1
\end{equation}
\begin{equation}
\left \langle \left \langle \raisebox{-0.6 em}{\includegraphics[scale=0.4]{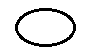}} \right \rangle \right \rangle=\delta
\end{equation}
\begin{equation}
\left \langle \left \langle \raisebox{-0.3 em}{\includegraphics[scale=0.3]{s64.PNG}} \raisebox{-0.6 em}{\includegraphics[scale=0.6]{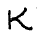}} \right \rangle \right \rangle = \delta \left \langle \left \langle \raisebox{-0.6 em}{\includegraphics[scale=0.6]{s66.PNG}} \right \rangle \right \rangle
\end{equation}
\begin{equation}
 \left \langle \left \langle \raisebox{-0.6 em}{\includegraphics[scale=0.6]{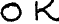}} \right \rangle \right \rangle = \delta^{2} \left \langle \left \langle \raisebox{-0.6 em}{\includegraphics[scale=0.6]{s66.PNG}} \right \rangle \right \rangle
\end{equation}

\begin{exa}[]
\label{exa:dblhopf}
 We wish to calculate the bracket polynomial of the 2-strand diagram of the standard Hopf link.
\end{exa}

Let us note that ;
\begin{equation}
\left \langle \left \langle \raisebox{-0.5 em}{\includegraphics[scale=0.6]{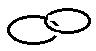}} \right \rangle \right \rangle = \left \langle \left \langle \raisebox{-0.2 em}{\includegraphics[scale=0.6]{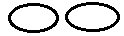}} \right \rangle \right \rangle = \left \langle  \raisebox{-0.2 em}{\includegraphics[scale=0.6]{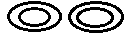}} \right \rangle = \delta^{3}
\end{equation}

Now applying the switching formula we have,
\begin{eqnarray*}
\left \langle \left \langle \includegraphics[scale=0.4]{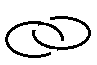} \right \rangle \right \rangle - \left \langle \left \langle \includegraphics[scale=0.4]{s59.PNG} \right \rangle \right \rangle & = & (A^{4}-A^{-4})\left[\left \langle \left \langle \includegraphics[scale=0.6]{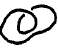} \right \rangle \right \rangle-\left \langle \left \langle \includegraphics[scale=0.6]{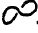} \right \rangle \right \rangle \right] \\
                      & + & (A^{2}-A^{-2})\left[\left \langle \left \langle \raisebox{-0.5 em}{\includegraphics[scale=0.6]{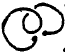}} \right \rangle \right \rangle + \left \langle \left \langle \raisebox{-0.5 em}{\includegraphics[scale=0.6]{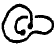}} \right \rangle \right \rangle - \left \langle \left \langle \raisebox{-0.5 em}{\includegraphics[scale=0.6]{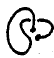}} \right \rangle \right \rangle - \left \langle \left \langle \raisebox{-0.5 em}{\includegraphics[scale=0.6]{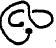}} \right \rangle \right \rangle\right]
\end{eqnarray*}

Therefore,
\begin{equation}
  \left \langle \left \langle \includegraphics[scale=0.6]{s85.PNG} \right \rangle \right \rangle = \delta^{3} + (A^{4}-A^{-4})\left[ \left \langle \left \langle \raisebox{-0.5 em}{\includegraphics[scale=0.6]{s56.PNG}} \right \rangle \right \rangle - \left \langle \left \langle \raisebox{-0.5 em}{\includegraphics[scale=0.6]{s51.PNG}} \right \rangle \right \rangle \right]
\end{equation}

We have that
\begin{eqnarray*}
  \left \langle \left \langle \raisebox{-0.5 em}{\includegraphics[scale=0.5]{s56.PNG}} \right \rangle \right \rangle & = & A^{8}  \left \langle \left \langle \includegraphics[scale=0.3]{s70.PNG} \right \rangle \right \rangle + (A^{6}-A^{2})  \left \langle \left \langle \includegraphics[scale=0.3]{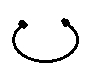} \right \rangle \right \rangle \\
                    & = & A^{8}(-A^{2}-A^{-2})+ A^{6}-A^{2}\\
                    & = & -A^{10}-A^{6}+A^{6}-A^{2}\\
                    & = & -A^{10}-A^{2}
 \end{eqnarray*}
 
 and
 \begin{eqnarray*}
  \left \langle \left \langle \raisebox{-0.5 em}{\includegraphics[scale=0.5]{s51.PNG}} \right \rangle \right \rangle & = & A^{-8}  \left \langle \left \langle \includegraphics[scale=0.3]{s70.PNG} \right \rangle \right \rangle + (A^{-6}-A^{-2})  \left \langle \left \langle \includegraphics[scale=0.3]{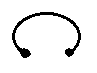} \right \rangle \right \rangle \\
                    & = & A^{-8}(-A^{2}-A^{-2})+ A^{-6}-A^{-2}\\
                    & = & -A^{-6}-A^{-10}+A^{-6}-A^{-2}\\
                    & = & -A^{-10}-A^{-2}
 \end{eqnarray*}
 
So then
\begin{eqnarray*}
  \left \langle \left \langle \raisebox{-0.5 em}{\includegraphics[scale=0.3]{s85.PNG}} \right \rangle \right \rangle & = & -(A^{6}+3A^{2}+3A^{-2}+A^{-6}) + (A^{4}-A^{-4})(-A^{10}-A^{2}+A^{-10}+A^{-2})\\
                      & = & -(A^{6}+3A^{2}+3A^{-2}+A^{-6}) + (-A^{14}+A^{2}+A^{-2}-A^{-14})\\
                      & = & -(A^{-14}+A^{-6}+2A^{-2}+2A^{2}+A^{6}+A^{14})
 \end{eqnarray*}
 \begin{equation}
 \left \langle \raisebox{-0.5 em}{\includegraphics[scale=0.3]{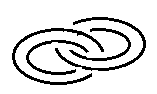}} \right \rangle = -(A^{-14}+A^{-6}+2A^{-2}+2A^{2}+A^{6}+A^{14})
 \end{equation}
 
 \section{The Bracket Polynomial of Tangles}
 
 \subsection{Description and Notations}
 
 \begin{defn}[]
\label{defn:tangle}
A \textbf{tangle} in a knot or link projection is a region in the projection plane surrounded by a circle such that the knot or link crosses the circle exactly four times.
\end{defn}

We will always think of the four parts where the knot or link crosses the circle as occurring in the four compass direction; North-West, North-East, South-West and South-East.

\begin{figure}[!h]
\centering 
\includegraphics[scale=0.6]{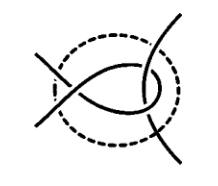}
\includegraphics[scale=0.6]{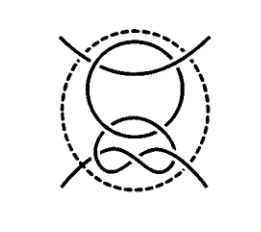}
\caption{Examples of tangle}
\label{Egtangles} 
\end{figure}

Let $T$ be a tangle in a knot or link projection, then by convention we denote $-T$ to be its reflection(or mirror image). For a twist of $(n)$ crossings proceeding from west to east, we use the twist that is right-handed for $n>0$ and left-handed for $n<0$. 

\begin{figure}[!ht]
    \subfloat[$3$\label{subfig-1:dummy}]{%
      \includegraphics[scale=1.0]{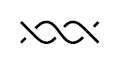}
    }
    \hfill
    \subfloat[$-3$ \label{subfig-2:dummy}]{%
      \includegraphics[scale=1.0]{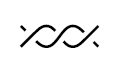}
    }
    \caption{}
    \label{intangles}
  \end{figure}
  
Given a tangle $T(\raisebox{-0.6 em}{\includegraphics[scale=0.4]{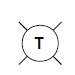}})$, we obtain the \textbf{numerator}, $T^{N}$ by attaching the (top) $NW$ and $NE$ edges of $T$ together and attaching the (bottom) $SW$ and $SE$ edges
together. Also the \textbf{denominator}, $T^{D}$ is obtained by attaching the (left side) $NW$ and $SW$ edges together and attaching the (right side) $NE$ and $SE$ edges together.

Given tangles $T(\raisebox{-0.6 em}{\includegraphics[scale=0.45]{t6.PNG}})$ and $U(\raisebox{-0.8 em}{\includegraphics[scale=0.3]{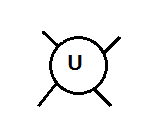}})$, one defines the \textbf{sum}, denoted $T + U$ by placing the diagram for $U$ to the right of the diagram for $T$ and attaching the $NE$ edge of $T$ to the $NW$ edge of $U$, and the $SE$ edge of $T$ to the $SW$ edge of $S$. The resulting tangle $T + U$ has exterior edges corresponding to the $NW$ and $SW$ edges of $T$ and the $NE$ and $SE$ edges of $U$. We shall also have occasion to consider the \textbf{"vertical sum"} of two tangles say $T$ and $U$ by placing the diagram for $U$ below the diagram for $T$ and attaching the $SE$ edge of $T$ to the $NE$ edge of $U$, and the $SW$ edge of $T$ to the $NW$ edge of $U$. The resulting tangle $T \ast U$ has exterior edges corresponding to the $NW$ and $NE$ edges of $T$ and the $SW$ and $SE$ edges of $U$.

\begin{figure}[!h]
\centering 
\includegraphics[scale=0.5]{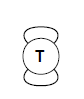}
\caption{Numerator tangle of $T$, $T^{N}$}
\label{numeartot} 
\includegraphics[scale=0.5]{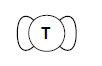}
\caption{Denominator tangle of $T$, $T^{D}$}
\label{den} 
\includegraphics[scale=0.5]{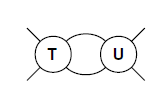}
\caption{Tangle $T+S$}
\label{sum}
\includegraphics[scale=0.5]{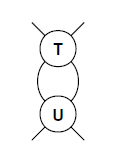}
\caption{Tangle $T \ast S$}
\label{vert sum}
\end{figure} 

\newpage
We note that the "integer" tangle $-n$ is indeed an additive inverse of the tangle $n$, in that their sum is $0= \raisebox{-0.6 em}{\includegraphics[scale=0.3]{t21.PNG}}$.

We shall denote by $T^{-1}$, the reflection of $T$ in a $NW-SE$ axis. In other words rotating all of it's edges through $90^{\circ}$ counter-clockwise and finding the mirror image. We define the product of two tangles, $T.U$ as the tangle sum $T^{-1}+U$. Note that $T.0 = T^{-1}$. Traditionally for $n \in \mathbb{Z}$, $n \neq 0$, the tangle $n^{-1}$ is denoted $\frac{1}{n}$ and $0^{-1}=\raisebox{-0.5 em}{\includegraphics[scale=0.3]{t24.PNG}}$ is denoted $\infty$.

\begin{figure}[!ht]
    \subfloat[$3^{-1}$ \label{subfig-1:dummy}]{%
      \includegraphics[scale=0.55]{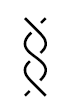}
    }
    \hfill
    \subfloat[$3 \cdot 2$\label{subfig-2:dummy}]{%
      \includegraphics[scale=0.55]{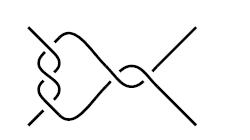}
    }
    \caption{}
    \label{egtangles}
  \end{figure}
  
 Regarding iterated "product", we follow the standard convention that $T \cdot U \cdot V$ is to be interpreted as $(T \cdot U) \cdot V$, i.e 
 \begin{eqnarray*}
 T \cdot U \cdot V & = & (T \cdot U) \cdot V \\
       & = & (T^{-1}+U) \cdot V \\
       & = & [(T^{-1}+U)]^{-1} + V
 \end{eqnarray*}

The class of examples that we consider in this paper are each denoted by $H(T,U)$, where $T$ and $U$ are each tangles and $H(T,U)$ is a satellite of the Hopf link that conforms to the pattern shown in Figure ~\ref{sathopf} below.

\begin{figure}[!h]
\centering 
\includegraphics[scale=0.55]{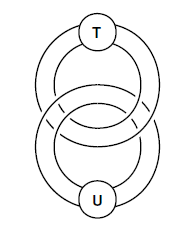}
\caption{$H(T,U)$}
\label{sathopf} 
\end{figure}

\subsection{The Bracket Polynomial of $H(T,U)$ }

Given a tangle, $T$, the bracket expansion formula; $\left \langle \raisebox{-0.8 em}{\includegraphics[scale=0.3]{t55.PNG}} \right \rangle = A\left \langle \raisebox{-0.6 em}{\includegraphics[scale=0.3]{t24.PNG}} \right \rangle + A^{-1}\left \langle \raisebox{-0.6 em}{\includegraphics[scale=0.3]{t21.PNG}} \right \rangle$ together with the rule, $\left \langle L \sqcup O \right \rangle = \delta \left \langle L \right \rangle $(applicable also to link diagrams) allows us to express the symbol $\langle T \rangle$ as a formal linear combination. We have that $\langle T \rangle = f(T)\langle 0 \rangle+g(T)\langle \infty \rangle$, where $\langle 0 \rangle$,$\langle \infty \rangle$ are regarded as primitive objects and the coefficient $f(T)$,$g(T)$ are in the ring $\mathbb{Z}[A,A^{-1}]$.

We define the bracket vector of $T$ to be the ordered pair $(f(T),g(T))$ and denote it by $br(T)$. For example $br(1)=(A,A^{-1})$. Where appropriate, we shall consider $br(T)$ as the column vector $\begin{pmatrix}
f(T) \\ 
g(T)
\end{pmatrix}$.
\begin{exa}[]
\label{exa:numerator}
Let us consider the tangle $2$ and obtain its bracket vector.
\end{exa}

Traditionally we know that the tangle $2$ is represented by the diagram, \raisebox{-0.4 em}{\includegraphics[scale=0.3]{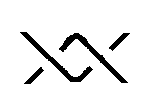}} and we can compute its bracket polynomial as follows,
\begin{eqnarray*}
\left \langle \raisebox{-0.2 em}{\includegraphics[scale=0.3]{c1.PNG}} \right \rangle & = & A \left \langle \raisebox{-0.2 em}{\includegraphics[scale=0.3]{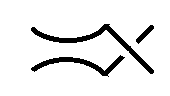}} \right \rangle + A^{-1} \left \langle \raisebox{-0.2 em}{\includegraphics[scale=0.3]{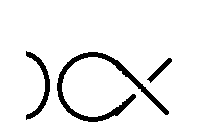}} \right \rangle\\
                             & = & A\left[ A \left \langle \raisebox{-0.2 em}{\includegraphics[scale=0.3]{t21.PNG}} \right \rangle + A^{-1} \left \langle \raisebox{-0.4 em}{\includegraphics[scale=0.3]{t24.PNG}} \right \rangle \right] + A^{-1}\left[ A \left \langle \raisebox{-0.4 em}{\includegraphics[scale=0.3]{t24.PNG}} \right \rangle + A^{-1}\left \langle \raisebox{-0.2 em}{\includegraphics[scale=0.3]{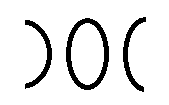}} \right \rangle \right]\\
                             & = & A^{2} \left \langle \raisebox{-0.2 em}{\includegraphics[scale=0.3]{t21.PNG}} \right \rangle + (2+A^{-2}(-A^{-2}-A^{2}))\left \langle \raisebox{-0.4 em}{\includegraphics[scale=0.3]{t24.PNG}} \right \rangle\\
                             & = & A^{2} \left \langle \raisebox{-0.2 em}{\includegraphics[scale=0.3]{t21.PNG}} \right \rangle + (2-A^{-4}-1)\left \langle \raisebox{-0.4 em}{\includegraphics[scale=0.3]{t24.PNG}} \right \rangle\\
                             & = & A^{2} \left \langle \raisebox{-0.2 em}{\includegraphics[scale=0.3]{t21.PNG}} \right \rangle + (1-A^{-4})\left \langle \raisebox{-0.4 em}{\includegraphics[scale=0.3]{t24.PNG}} \right \rangle\\
                              & = & A^{2} \langle 0 \rangle + (1-A^{-4})\langle \infty \rangle
\end{eqnarray*}
In this example we have that $f(T)= A^{2}$ and $g(T)=(1-A^{-4})$. Our bracket vector is therefore given as $br(2) = \begin{pmatrix}
A^{2} \\ 
1-A^{-4}
\end{pmatrix}$.

\begin{prop}[\cite{a2}]
\label{prop:numerator}

\end{prop}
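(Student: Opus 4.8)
The plan is to prove the stated formula for the bracket vector of a tangle sum by exploiting the fact that the Kauffman bracket, viewed as a function of what is inserted into a tangle slot, is linear with respect to the two primitive objects $\langle 0 \rangle$ and $\langle \infty \rangle$. Concretely, I would begin from the defining expansions $\langle T \rangle = f(T)\langle 0 \rangle + g(T)\langle \infty \rangle$ and $\langle U \rangle = f(U)\langle 0 \rangle + g(U)\langle \infty \rangle$, which exist by the argument given just before Example~\ref{exa:numerator} (repeated application of rule $2$ together with rule $3$ collapses any tangle to a $\mathbb{Z}[A,A^{-1}]$-combination of the two primitives). The key structural observation is that the crossings of the diagram $T+U$ are exactly the disjoint union of the crossings of $T$ and those of $U$, so the bracket state sum over $T+U$ factors, and $\langle T+U\rangle$ becomes bilinear in the pair $(br(T),br(U))$. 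This reduces the whole computation to evaluating the four ``primitive sums''.

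The second step is to identify those four primitive sums geometrically. Placing the $0$- and $\infty$-tangles side by side and gluing the $NE$/$SE$ edges of the left tangle to the $NW$/$SW$ edges of the right one, I would read off directly from the pictures that
\[ 0+0 = 0,\qquad 0+\infty = \infty,\qquad \infty+0 = \infty,\qquad \infty+\infty = \infty \sqcup O, \]
where $O$ is a disjoint simple closed curve produced when the two ``vertical'' arc pairs close on each other. Applying rule $3$ to the last case gives $\langle \infty+\infty\rangle = \delta\langle \infty\rangle$, while the first three contribute their primitives with coefficient $1$. Substituting back into the bilinear expansion and collecting the coefficients of $\langle 0\rangle$ and $\langle \infty\rangle$ then yields
\[ f(T+U)=f(T)f(U),\qquad g(T+U)=f(T)g(U)+g(T)f(U)+\delta\,g(T)g(U). \]
As an immediate check I would specialise to $U=1$, where $br(1)=(A,A^{-1})$ turns this into the single-twist recursion $f(T+1)=Af(T)$ and $g(T+1)=A^{-1}f(T)-A^{-3}g(T)$; starting from $br(0)=(1,0)$ this reproduces $br(1)=(A,A^{-1})$ and $br(2)=(A^{2},1-A^{-4})$, matching Example~\ref{exa:numerator} exactly.

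The step I expect to be the main obstacle is the first one: making the bilinearity precise rather than merely plausible. One must argue that resolving the crossings of $T$ commutes with resolving those of $U$, so that the coefficients $f(T),g(T)$ genuinely behave as coordinates of $T$ relative to the fixed ``basis'' $\{\langle 0\rangle,\langle \infty\rangle\}$ independently of whatever tangle is glued on the right. The cleanest way to secure this is the state-sum viewpoint: each state of $T+U$ is an independent choice of smoothing at every crossing, the resulting loop count is additive once the gluing identifications are accounted for, and summing over the crossings of $T$ first literally produces $f(T)\langle 0\rangle+g(T)\langle \infty\rangle$ sitting inside the larger diagram. A secondary, purely combinatorial pitfall is the bookkeeping of the loop created in the $\infty+\infty$ case; getting the single factor of $\delta$ right there, and not $\delta^{2}$, is precisely what makes the quadratic term in $g(T+U)$ come out correctly.
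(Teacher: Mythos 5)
Your argument for the sum formula is correct and follows essentially the paper's own route: the paper likewise writes $\langle T\rangle = f(T)\langle 0\rangle + g(T)\langle \infty\rangle$, substitutes this inside the diagram $T+U$, expands $U$ the same way, and evaluates the four primitive sums, with $\infty+\infty = \infty\sqcup O$ supplying the single factor of $\delta$ exactly as you say. Your state-sum justification of bilinearity is sound, and in fact more careful than the paper, which simply substitutes the linear combination inside the bracket without comment (legitimately so: the smoothings of the crossings of $T$ can be resolved first, and the resulting coefficients in $\mathbb{Z}[A,A^{-1}]$ do not depend on what is glued to the tangle boundary). Your consistency check reproducing $br(2)=(A^{2},1-A^{-4})$ also matches Example~\ref{exa:numerator}.

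The genuine gap is coverage: the proposal proves only one of the three assertions of Proposition~\ref{prop:numerator}. Part 1, the closure formula
\begin{equation*}
\begin{pmatrix}\langle T^{N}\rangle\\ \langle T^{D}\rangle\end{pmatrix}
=\begin{pmatrix}\delta & 1\\ 1 & \delta\end{pmatrix} br(T),
\end{equation*}
and the vertical-sum formula for $br(T\ast U)$ are left unproved, and neither is a formal consequence of the horizontal-sum formula — each requires its own identification of primitives. For the closures: $0^{N}$ is two disjoint circles (bracket $\delta$), $\infty^{N}$ is a single circle (bracket $1$), and symmetrically for the denominator, giving $\langle T^{N}\rangle=\delta f(T)+g(T)$ and $\langle T^{D}\rangle=f(T)+\delta g(T)$. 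For the vertical sum the roles of $0$ and $\infty$ are exchanged relative to your computation: $0\ast 0 = 0\sqcup O$, $0\ast\infty=\infty\ast 0=0$, and $\infty\ast\infty=\infty$, so the closed loop (hence the $\delta$) now lands in the $f$-coordinate, yielding $f(T\ast U)=\delta f(T)f(U)+f(T)g(U)+g(T)f(U)$ and $g(T\ast U)=g(T)g(U)$. These omissions matter downstream: the $\ast$ formula is precisely what produces the matrix $M_{\ast}$, and with it $\Omega$, in the proof of Theorem~\ref{thm:final}, so the proposal as written does not yet establish the proposition it targets.
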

\begin{enumerate}
\item $\begin{pmatrix}
\langle T^{N} \rangle \\ 
\langle T^{D} \rangle
\end{pmatrix} = \begin{pmatrix}
\delta & 1 \\ 
1 & \delta
\end{pmatrix} br(T)$
\item $br(T+U)= \begin{pmatrix}
f(U) & 0 \\ 
g(U) & f(U)+\delta g(U)
\end{pmatrix} br(T).$

$br(T \ast U)= \begin{pmatrix}
\delta f(U) +g(U) & f(U) \\ 
0 &  g(U)
\end{pmatrix} br(T).$
\end{enumerate} 

\begin{proof}
\begin{enumerate}

\item From the above expression of the bracket polynomial of $T$, we know that;

$\langle T \rangle = f(T)\langle 0 \rangle+g(T)\langle \infty \rangle$, where $0=\raisebox{-0.6 em}{\includegraphics[scale=0.3]{t21.PNG}} $, $\infty =\raisebox{-0.6 em}{\includegraphics[scale=0.3]{t24.PNG}}$

We also note that $T^{N}=\raisebox{-0.6 em}{\includegraphics[scale=0.3]{t7.PNG}}$ and $T^{D}= \raisebox{-0.5 em}{\includegraphics[scale=0.3]{t8.PNG}}$.
\begin{eqnarray*}
\langle T^{N} \rangle = \left \langle \raisebox{-0.6 em}{\includegraphics[scale=0.3]{t7.PNG}} \right \rangle & = & f(T)\left \langle \raisebox{-0.8 em}{\includegraphics[scale=0.3]{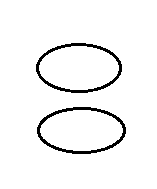}} \right \rangle + g(T)\left \langle \raisebox{-0.8 em}{\includegraphics[scale=0.3]{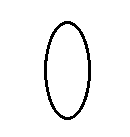}} \right \rangle \\
                     & = & \delta f(T)+g(T)
\end{eqnarray*}
Also,
\begin{eqnarray*}
\langle T^{D} \rangle = \left \langle \raisebox{-0.4 em}{\includegraphics[scale=0.3]{t8.PNG}} \right \rangle & = & f(T)\left \langle \raisebox{-0.6 em}{\includegraphics[scale=0.3]{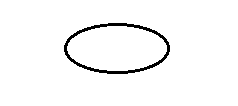}} \right \rangle + g(T)\left \langle \raisebox{-0.6 em}{\includegraphics[scale=0.3]{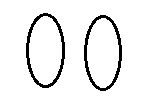}} \right \rangle \\
                     & = & f(T)+ \delta  g(T)
\end{eqnarray*}
therefore,
\begin{eqnarray*}
 \begin{pmatrix}
\langle T^{N} \rangle \\ 
\langle T^{D} \rangle
\end{pmatrix} = \begin{pmatrix}
\delta & 1 \\ 
1 & \delta
\end{pmatrix} 
\begin{pmatrix}
f(T) \\ 
g(T)
\end{pmatrix} =  \begin{pmatrix}
\delta & 1 \\ 
1 & \delta
\end{pmatrix} br(T)
\end{eqnarray*} 

\item We know that $T+U$ is denoted as \raisebox{-0.6 em}{\includegraphics[scale=0.4]{t9.PNG}}.
\begin{eqnarray*}
\langle T+U \rangle = \left \langle \raisebox{-0.6 em}{\includegraphics[scale=0.4]{t9.PNG}} \right \rangle & = & f(T)\left \langle \raisebox{-0.6 em}{\includegraphics[scale=0.25]{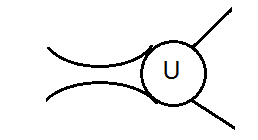}} \right \rangle + g(T)\left \langle \raisebox{-0.6 em}{\includegraphics[scale=0.25]{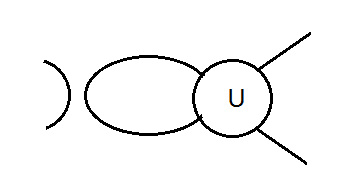}} \right \rangle \\
              & = & f(T)[f(U) \left \langle \raisebox{-0.8 em}{\includegraphics[scale=0.25]{t21.PNG}} \right \rangle+ g(U)\left \langle \raisebox{-0.8 em}{\includegraphics[scale=0.25]{t24.PNG}} \right \rangle]+ g(T)[f(U)\left \langle \raisebox{-0.8 em}{\includegraphics[scale=0.25]{t24.PNG}} \right \rangle\\
              & + & g(U)\left \langle \raisebox{-0.6 em}{\includegraphics[scale=0.25]{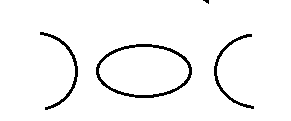}} \right \rangle] \\
              & = & f(T)[f(U) \left \langle \raisebox{-0.8 em}{\includegraphics[scale=0.25]{t21.PNG}} \right \rangle+ g(U)\left \langle \raisebox{-0.8 em}{\includegraphics[scale=0.25]{t24.PNG}} \right \rangle] + g(T)[f(U)\left \langle \raisebox{-0.8 em}{\includegraphics[scale=0.25]{t24.PNG}} \right \rangle\\
              & + & \delta g(U)\left \langle \raisebox{-0.8 em}{\includegraphics[scale=0.25]{t24.PNG}} \right \rangle]  \\
              & = & f(T)f(U)\left \langle \raisebox{-0.8 em}{\includegraphics[scale=0.2]{t21.PNG}} \right \rangle + [f(T)g(U)+ g(T)f(U)+ \delta g(T)g(U)]\left \langle \raisebox{-0.8 em}{\includegraphics[scale=0.2]{t24.PNG}} \right \rangle
\end{eqnarray*}
$\Longrightarrow f(T+U)=f(T)f(U)$ and $ g(T+U)=f(T)g(U)+ g(T)f(U)+ \delta g(T)g(U)$

therefore,
\begin{eqnarray*}
 br(T+U)= \begin{pmatrix}
 f(T)f(U) \\ 
f(T)g(U)+ g(T)f(U)+ \delta g(T)g(U)
\end{pmatrix} 
& = & 
\begin{pmatrix}
f(U) & 0 \\ 
g(U) & f(U)+ \delta g(U)
\end{pmatrix} 
\begin{pmatrix}
f(T) \\ 
g(T)
\end{pmatrix} \\
& = & 
\begin{pmatrix}
f(U) & 0 \\ 
g(U) & f(U)+ \delta g(U)
\end{pmatrix} br(T)
\end{eqnarray*}

Also, we know that $T \ast U$ is denoted as $\raisebox{-0.6 em}{\includegraphics[scale=0.4]{t10.PNG}}$
\begin{eqnarray*}
\langle T \ast U \rangle = \left \langle \raisebox{-1.2 em}{\includegraphics[scale=0.4]{t10.PNG}} \right \rangle & = & f(T)\left \langle \raisebox{-1.4 em}{\includegraphics[scale=0.2]{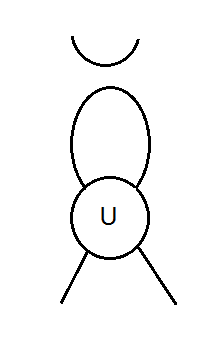}} \right \rangle + g(T)\left \langle \raisebox{-1.2 em}{\includegraphics[scale=0.25]{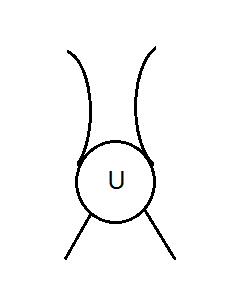}} \right \rangle \\
              & = & f(T)[f(U) \left \langle \raisebox{-1.6 em}{\includegraphics[scale=0.25]{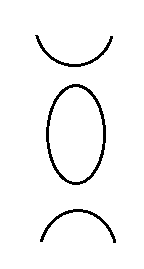}} \right \rangle+ g(U)\left \langle \raisebox{-0.8 em}{\includegraphics[scale=0.25]{t21.PNG}} \right \rangle]+ g(T)[f(U)\left \langle \raisebox{-0.8 em}{\includegraphics[scale=0.25]{t21.PNG}} \right \rangle \\
              & + & g(U)\left \langle \raisebox{-0.8 em}{\includegraphics[scale=0.25]{t24.PNG}} \right \rangle] \\
              & = & f(T)[\delta f(U) \left \langle \raisebox{-0.8 em}{\includegraphics[scale=0.25]{t21.PNG}} \right \rangle+ g(U)\left \langle \raisebox{-0.8 em}{\includegraphics[scale=0.25]{t21.PNG}} \right \rangle] + g(T)[f(U)\left \langle \raisebox{-0.8 em}{\includegraphics[scale=0.25]{t21.PNG}} \right \rangle\\ 
              & + & g(U)\left \langle \raisebox{-0.8 em}{\includegraphics[scale=0.25]{t24.PNG}} \right \rangle]  \\
              & = & [\delta f(T)f(U)+  f(T)g(U)+ g(T)f(U)]\left \langle \raisebox{-0.8 em}{\includegraphics[scale=0.25]{t21.PNG}} \right \rangle + g(T)g(U)\left \langle \raisebox{-0.8 em}{\includegraphics[scale=0.25]{t24.PNG}} \right \rangle
\end{eqnarray*}
$\Longrightarrow f(T \ast U)=\delta f(T)f(U)+  f(T)g(U)+ g(T)f(U) $ and $ g(T \ast U)= g(T)g(U))$

therefore,
\begin{eqnarray*}
 br(T \ast U)= \begin{pmatrix}
 \delta f(T)f(U)+  f(T)g(U)+ g(T)f(U) \\ 
 g(T)g(U)
\end{pmatrix} 
& = & 
\begin{pmatrix}
\delta f(U)+  g(U) & f(U) \\ 
0 & g(U)
\end{pmatrix} 
\begin{pmatrix}
f(T) \\ 
g(T)
\end{pmatrix} \\
& = & 
\begin{pmatrix}
\delta f(U)+  g(U) & f(U) \\ 
0 & g(U)
\end{pmatrix} br(T)
\end{eqnarray*}
\end{enumerate}
\end{proof}

Now considering the class of link under study,$H(T,U)$. We observe that if we take $(T,U)=(0,0)$ we obtain the $2$-strand parallel of the standard diagram of the Hopf link.
The bracket polynomial of that diagram was computed in Example ~\ref{exa:dblhopf} of the previous chapter to be; 
$$ -(A^{-14}+A^{-6}+2A^{-2}+2A^{2}+A^{6}+A^{14})$$

The choice $(T,U)=(0,\infty)$(or $(T,U)=(\infty, 0)$) yields a diagram with writhe 0 of the unlink with $3$ components and $(T,U)=(\infty,\infty)$ gives a diagram with writhe 0 of the unlink with $2$ components.
Therefore the bracket polynomial of $H(0,\infty)$, $H(\infty,\infty)$ are $\delta^{2}$,$\delta$ respectively.

Let us define;
\begin{equation}
h_{00}=\langle H(0,0) \rangle = -(A^{-14}+A^{-6}+2A^{-2}+2A^{2}+A^{6}+A^{14}) 
\end{equation}
\begin{equation}
h_{01}=h_{10}= \langle H(0,\infty) \rangle = \delta^{2}
\end{equation}
\begin{equation}
h_{11}= \langle H(\infty,\infty) \rangle = \delta
\end{equation}

and let $\mathcal{M}$ denote the matrix,
$\begin{pmatrix}
h_{00} & h_{01} \\ 
h_{10} & h_{11}
\end{pmatrix} $

\begin{prop}[\cite{a2}]
\label{prop:jwt}
$\langle H(T,U) \rangle=h_{00}f(T)f(U)+h_{01}[f(T)g(U)+g(T)f(U)]+h_{11}g(T)g(U)$

or in the matrix notation,
 
$\langle H(T,U) \rangle=br(T)^{t} \cdot \mathcal{M} \cdot br(U)$
\end{prop}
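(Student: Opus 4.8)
The plan is to exploit the fundamental linearity of the bracket under tangle substitution, which is precisely the property underlying the decomposition $\langle T \rangle = f(T)\langle 0 \rangle + g(T)\langle \infty \rangle$ that defines $br(T)$. The essential observation is that this decomposition is not merely a formal identity for an isolated tangle: whenever a tangle $T$ sits inside a larger link diagram $D$, the bracket of $D$ depends on $T$ only through the two coefficients $f(T)$ and $g(T)$. Concretely, if $D_{0}$ and $D_{\infty}$ denote the diagrams obtained from $D$ by replacing $T$ with the primitive tangles $0$ and $\infty$ respectively, then $\langle D \rangle = f(T)\langle D_{0} \rangle + g(T)\langle D_{\infty} \rangle$. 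I would establish this substitution principle first, since everything else is bookkeeping.

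To prove the substitution principle I would appeal to the state-sum description of the bracket. Expanding every crossing of $D$ by rule $2$ expresses $\langle D \rangle$ as a sum over all smoothings, and this sum factors into the smoothings performed inside the disk containing $T$ and those performed outside it. Summing first over the internal smoothings, each internal state closes the four boundary points of the disk in one of exactly two ways, namely the $0$-pattern ($NW$--$NE$, $SW$--$SE$) or the $\infty$-pattern ($NW$--$SW$, $NE$--$SE$), together with some closed loops contributing powers of $\delta$. By the very definition of $br(T)$, the total weight of the internal states realizing the $0$-pattern is $f(T)$ and that of the states realizing the $\infty$-pattern is $g(T)$, the internal loop factors being already absorbed. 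Since the external contribution depends only on which of the two patterns is presented at the boundary, regrouping yields $\langle D \rangle = f(T)\langle D_{0} \rangle + g(T)\langle D_{\infty} \rangle$.

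With this in hand the proposition follows by applying the principle twice. The diagram $H(T,U)$ contains $T$ and $U$ in disjoint disks, so I first substitute on $T$ to get $\langle H(T,U) \rangle = f(T)\langle H(0,U)\rangle + g(T)\langle H(\infty,U)\rangle$, and then substitute on $U$ in each of the two resulting terms to obtain the bilinear expansion
\begin{equation*}
\langle H(T,U)\rangle = f(T)f(U)\langle H(0,0)\rangle + f(T)g(U)\langle H(0,\infty)\rangle + g(T)f(U)\langle H(\infty,0)\rangle + g(T)g(U)\langle H(\infty,\infty)\rangle.
\end{equation*}
I then invoke the values recorded just before the statement, $\langle H(0,0)\rangle = h_{00}$, $\langle H(0,\infty)\rangle = \langle H(\infty,0)\rangle = h_{01}=h_{10}$, and $\langle H(\infty,\infty)\rangle = h_{11}$. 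Collecting the two middle terms via $h_{01}=h_{10}$ gives the first displayed formula. Finally, expanding $br(T)^{t}\cdot \mathcal{M}\cdot br(U)$ with $br(T)=(f(T),g(T))^{t}$, $br(U)=(f(U),g(U))^{t}$, and $\mathcal{M}$ the matrix of the $h_{ij}$ reproduces exactly the same four-term sum, which yields the matrix form.

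The only genuine content lies in the substitution principle of the second paragraph; once it is granted, the remainder is a direct bilinear expansion followed by a one-line matrix multiplication. The main obstacle I anticipate is stating the factorization of the state sum carefully enough to be convincing, in particular making precise that every smoothing of the crossings inside a disk closes the four boundary arcs into one of only the two patterns $0$ or $\infty$, and that the loop factors $\delta$ arising internally are exactly those already packaged into $f(T)$ and $g(T)$. This is intuitively clear from the planar picture, but it is the step the argument must be anchored on, and it is the same mechanism already used implicitly in the preceding proposition on $br(T+U)$ and $br(T\ast U)$.
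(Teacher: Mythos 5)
Your proposal is correct and follows essentially the same route as the paper: expand $\langle H(T,U)\rangle$ bilinearly in the two tangle slots via $\langle T \rangle = f(T)\langle 0\rangle + g(T)\langle \infty\rangle$, identify the four resulting brackets with $h_{00}, h_{01}=h_{10}, h_{11}$, and check the matrix form by direct multiplication. The only difference is that you make explicit (via the state-sum factorization) the substitution principle that the paper invokes implicitly through its pictorial expansion, which is a welcome but not structurally different addition.
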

\begin{proof}
\begin{eqnarray*}
\langle H(T,U) \rangle & = &  \left \langle \raisebox{-1.5 em}{\includegraphics[scale=0.25]{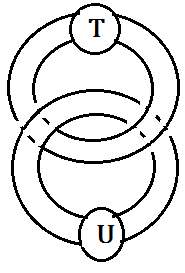}} \right \rangle \\
                       & = & f(T)\left \langle \raisebox{-1.6 em}{\includegraphics[scale=0.25]{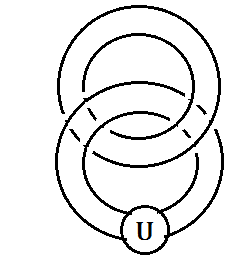}} \right \rangle + g(T)\left \langle \raisebox{-1.6 em}{\includegraphics[scale=0.25]{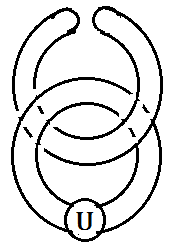}} \right \rangle\\
                       & = &f(T)\left[f(U)\left \langle \raisebox{-1.6 em}{\includegraphics[scale=0.25]{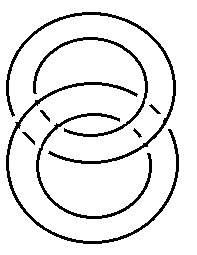}} \right \rangle + g(U) \left \langle \raisebox{-1.6 em}{\includegraphics[scale=0.25]{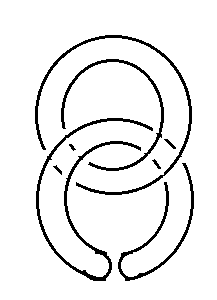}} \right \rangle \right]+ g(T) \left[f(U)\left \langle \raisebox{-1.6 em}{\includegraphics[scale=0.25]{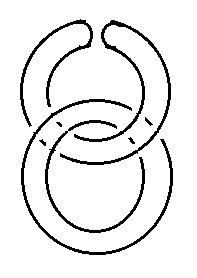}} \right \rangle + g(U)\left \langle \raisebox{-1.6 em}{\includegraphics[scale=0.25]{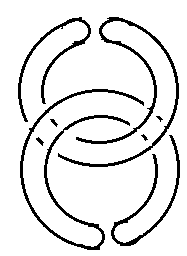}} \right \rangle \right]\\
                       & = & f(T)f(U)\langle H(0,0)\rangle +  f(T)g(U)\langle H(0,\infty)\rangle +  g(T)f(U)\langle H(\infty,0)\rangle +  g(T)g(U)\langle H(\infty,\infty)\rangle\\
                       & = &  h_{00}f(T)f(U) + h_{01}[f(T)g(U)+g(T)f(U)]+h_{11}g(T)g(U)
\end{eqnarray*}

We can now check that $br(T)^{t} \cdot \mathcal{M} \cdot br(U) = h_{00}f(T)f(U) + h_{01}(f(T)g(U)+g(T)f(U))+h_{11}g(T)g(U)$

\begin{eqnarray*}
br(T)^{t} \cdot \mathcal{M} \cdot br(U) & = &  br(T)^{t} \cdot (\mathcal{M} \cdot br(U))\\
                            & = & br(T)^{t} \cdot \left[ \begin{pmatrix}
h_{00} & h_{01} \\ 
h_{10} & h_{11}
\end{pmatrix} \cdot \begin{pmatrix}
f(U) \\ 
g(U)
\end{pmatrix} \right] \\
                  & = & \begin{pmatrix}
f(T) & g(T)
\end{pmatrix} \cdot \begin{pmatrix}
h_{00}f(U) + h_{01}g(U) \\ 
h_{10}f(U) + h_{11}g(U)
\end{pmatrix}  \\
                 & = & f(T) [h_{00}f(U) + h_{01}g(U)] +g(T)[h_{10}f(U) + h_{11}g(U)]\\
                 & = & h_{00}f(T)f(U) + h_{01}[f(T)g(U)+g(T)f(U)] +  h_{11}g(T)g(U)
\end{eqnarray*}
\end{proof}

\section{The Present Status of Links with Trivial Jones Polynomial}
\subsection{The Omega Operation.}
We apply a method based on the transformation $H(T,U) \longrightarrow H(T,U) ^{\omega}$ to construct links with trivial Jones polynomial. In this transformation, tangles $T$ and $U$ are cut out and re-glued by certain specific homeomorphisms of the tangle boundaries. The transformation $\omega$ has a property of preserving the Kauffman bracket polynomial. It is however, effective in generating examples, as a trivial link can be transformed to a prime link and repeated application yields an infinite sequence of inequivalent links.
\begin{defn}[\cite{a2}]
\label{defn:jwt}
Given a tangle $T$, $T^{\omega}$ denotes the tangle $(T+2).1.2$ and $T^{\bar{\omega}}$ the tangle $(T-2).(-1).(-2)$.
\end{defn}
\begin{figure}[!h]
\centering 
\includegraphics[scale=0.6]{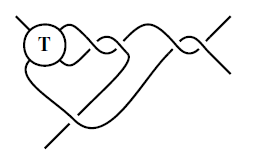}
\includegraphics[scale=0.6]{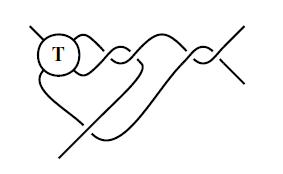}
\caption{$T^{\omega}$ and $T^{\bar{\omega}}$ }
\label{Omegaope} 
\end{figure}

Using the ''vertical sum'' operation introduced in chapter $3$, we can as well write,
\begin{equation}
T^{\omega}=((T+2)*1)+2
\end{equation}
\begin{equation}
T^{\bar{\omega}}=((T-2)*(-1))-2
\end{equation}
At this point we determine  the effect that the operation $\omega$ and $\bar{\omega}$ have on bracket vector of a tangle.

 From Proposition ~\ref{prop:numerator}  $2$ we have,
 \begin{eqnarray*}
br(T+1) & = & \begin{pmatrix}
A & 0 \\ 
A^{-1} & A + \delta A^{-1}
\end{pmatrix}br(T),\qquad \text{where} \quad \delta = -A^{2}-A^{-2}\\
        & = & \begin{pmatrix}
A & 0 \\ 
A^{-1} & -A^{-3}
\end{pmatrix}br(T) 
\end{eqnarray*}
and
\begin{eqnarray*}
br(T*1) & = & \begin{pmatrix}
\delta A + A^{-1}& A \\ 
0 & A^{-1}
\end{pmatrix}br(T),\qquad \text{where} \quad \delta = -A^{2}-A^{-2}\\
        & = & \begin{pmatrix}
-A^{3} & A \\ 
0 & A^{-1}
\end{pmatrix}br(T) 
\end{eqnarray*}

Similarly we have that,
\begin{equation}
br(T+(-1))=M_{+}^{-1} \cdot br(T)
\end{equation}
\begin{equation}
br(T*(-1))=M_{*}^{-1} \cdot br(T)
\end{equation}
where

$M_{+}=\begin{pmatrix}
A & 0 \\ 
A^{-1} & -A^{-3}
\end{pmatrix}$\quad , \quad $M_{*}=\begin{pmatrix}
-A^{3} & A \\ 
0 & A^{-1}
\end{pmatrix}$

In this context it is natural to introduce the $2 \times 2$ matrix,
\begin{equation} \label{eq:omega}
\Omega = M_{+}^{2}M_{*}M_{+}^{2}=\begin{pmatrix}
-A^{-1}+A^{3}-A^{7} & A^{-3} \\ 
-A^{-11}+2A^{-7}-2A^{-3}+2A-A^{5} & A^{-13}-A^{-9}+A^{-5}
\end{pmatrix} 
\end{equation}

\begin{prop}[\cite{a2}]
\label{prop:jwt}
\begin{enumerate}
\item $br(T^{\omega}) = \Omega \cdot br(T)$
\item $br(T^{\bar{\omega}}) = \Omega^{-1} \cdot br(T)$
\end{enumerate}
\end{prop}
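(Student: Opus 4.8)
The plan is to peel the operation $\omega$ apart into the elementary tangle moves whose effect on the bracket vector is already recorded in Proposition~\ref{prop:numerator}, and then multiply the corresponding matrices in the correct order. Concretely, I would start from the vertical-sum presentation $T^{\omega}=((T+2)*1)+2$ given just before the statement and read it left to right as three successive operations applied to $T$: first add the integer tangle $2$, then form the vertical sum with $1$, then add $2$ again.

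First I would record that adding the integer tangle $2$ multiplies the bracket vector by $M_+^2$. Since $2=1+1$ and tangle addition is associative, $T+2=(T+1)+1$, so two applications of the rule $br(T+1)=M_+\cdot br(T)$ from Proposition~\ref{prop:numerator}(2) give $br(T+2)=M_+^2\cdot br(T)$. As a logically self-contained alternative that avoids appealing to associativity, one can instead feed $br(2)=(A^2,\,1-A^{-4})$ from Example~\ref{exa:numerator} into the $br(T+U)$ matrix of Proposition~\ref{prop:numerator}(2) and check directly that the resulting matrix equals $M_+^2$. Likewise $br(T*1)=M_*\cdot br(T)$ is the vertical-sum rule of the same proposition specialized to $U=1$.

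With these pieces in hand, part 1 is a single chain of left-multiplications: $br(T+2)=M_+^2\,br(T)$, then $br((T+2)*1)=M_*M_+^2\,br(T)$, and finally $br(((T+2)*1)+2)=M_+^2M_*M_+^2\,br(T)=\Omega\cdot br(T)$, using the definition $\Omega=M_+^2M_*M_+^2$ from~\eqref{eq:omega}. For part 2, I would use the reflected presentation $T^{\bar{\omega}}=((T-2)*(-1))-2$ together with the already-stated identities $br(T+(-1))=M_+^{-1}\,br(T)$ and $br(T*(-1))=M_*^{-1}\,br(T)$. Then $-2=(-1)+(-1)$ yields $br(T-2)=M_+^{-2}\,br(T)$, and the same left-to-right bookkeeping gives $br(T^{\bar{\omega}})=M_+^{-2}M_*^{-1}M_+^{-2}\,br(T)=(M_+^2M_*M_+^2)^{-1}\,br(T)=\Omega^{-1}\cdot br(T)$, the last equality holding because $(M_+^2M_*M_+^2)^{-1}=M_+^{-2}M_*^{-1}M_+^{-2}$.

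The computation is routine; the only place to be careful is the order of multiplication. Because each rule $br(\mathrm{op}(T))=N\cdot br(T)$ attaches its matrix on the left of $br(T)$, a later operation in the word must be applied as a further left-multiplication, so the three factors accumulate as $M_+^2$, then $M_*$, then $M_+^2$ on the far left, producing $\Omega=M_+^2M_*M_+^2$ rather than the reverse product. The main (minor) obstacle is therefore just confirming that the $\omega$-word genuinely decomposes into these elementary moves and that ``$+2$'' contributes $M_+^2$ while ``$-2$'' and ``$-1$'' contribute $M_+^{-2}$ and $M_*^{-1}$; once that is pinned down, the matrix identities of Proposition~\ref{prop:numerator} finish both parts. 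The explicit entries displayed for $\Omega$ in~\eqref{eq:omega} can then be recovered, if desired, by carrying out the product $M_+^2M_*M_+^2$.
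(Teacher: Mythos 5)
Your proposal is correct and follows essentially the same route as the paper: decompose $T^{\omega}=((T+2)*1)+2$ (and its mirror for $\bar{\omega}$) into elementary $+1$ and $*1$ moves, left-multiply the corresponding matrices $M_{+}$, $M_{*}$ (or their inverses) in the order the operations are applied, and identify the accumulated product with $\Omega=M_{+}^{2}M_{*}M_{+}^{2}$ (resp.\ $\Omega^{-1}=M_{+}^{-2}M_{*}^{-1}M_{+}^{-2}$). The paper peels the word from the outside in while you build it from the inside out, but this is the same computation, and your care about the multiplication order matches the paper's chain exactly.
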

\begin{proof}
\begin{eqnarray*}
br(T^{\omega}) & = & br(((T+2)*1)+2)\\
               & = & M_{+} \cdot br(((T+2)*1)+1)\\
               & = & M_{+}^{2} \cdot br((T+2)*1)\\
               & = & M_{+}^{2}M_{*} \cdot br(T+2)\\
               & = & M_{+}^{2}M_{*}M_{+} \cdot br(T+1)\\
               & = & M_{+}^{2}M_{*}M_{+}^{2} \cdot br(T)\\
               & = & \Omega \cdot br(T)
\end{eqnarray*}
\begin{eqnarray*}
br(T^{\bar{\omega}}) & = & br(((T-2)*(-1))-2)\\
                     & = & M_{+}^{-1} \cdot br(((T-2)*(-1))-1)\\
                     & = & M_{+}^{-2} \cdot br((T-2)*(-1))\\
                     & = & M_{+}^{-2}M_{*}^{-1} \cdot br(T-2)\\
                     & = & M_{+}^{-2}M_{*}^{-1}M_{+}^{-1} \cdot br(T-1)\\
                     & = & M_{+}^{-2}M_{*}^{-1}M_{+}^{-2} \cdot br(T)\\
                     & = & \Omega^{-1} \cdot br(T)
\end{eqnarray*}
\end{proof}

\begin{defn}[\cite{a2}]
\label{defn:jwt}
Given tangles $T$,$U$, $H(T,U)^{\omega}$ denotes the diagram $H(T^{\omega},U^{\bar{\omega}})$(i.e $H(T,U)^{\omega} = H(T^{\omega},U^{\bar{\omega}})$).

In this context we designate $H(T,U)^{\omega} = H(T^{\omega},U^{\bar{\omega}})$ as the diagram obtained by replacing $T$ by $T^{\omega}$ and $U$ by $U^{\bar{\omega}}$.

\end{defn}
\begin{thm}[\cite{a2}]
\label{thm:final}
Let $T$, $U$ be any tangles. Then the bracket polynomials of $H(T,U)$ and $H(T,U)^{\omega}$ are equal.
\end{thm}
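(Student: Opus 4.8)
The plan is to reduce the statement to a single matrix identity over $\mathbb{Z}[A,A^{-1}]$ and then settle that identity by a direct (if lengthy) computation. Two facts already available do almost all of the structural work: the proposition computing $\langle H(T,U)\rangle$ gives the bilinear expression
\[
\langle H(T,U)\rangle=br(T)^{t}\cdot\mathcal{M}\cdot br(U),
\]
and the proposition on the $\omega$-operation gives $br(T^{\omega})=\Omega\cdot br(T)$ and $br(U^{\bar\omega})=\Omega^{-1}\cdot br(U)$, with $\Omega$ written out explicitly in~\eqref{eq:omega}.

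First I would unwind the definition $H(T,U)^{\omega}=H(T^{\omega},U^{\bar\omega})$ and feed it into the bilinear formula:
\begin{align*}
\langle H(T,U)^{\omega}\rangle
&= br(T^{\omega})^{t}\cdot\mathcal{M}\cdot br(U^{\bar\omega})\\
&= \bigl(\Omega\cdot br(T)\bigr)^{t}\cdot\mathcal{M}\cdot\bigl(\Omega^{-1}\cdot br(U)\bigr)\\
&= br(T)^{t}\cdot\bigl(\Omega^{t}\,\mathcal{M}\,\Omega^{-1}\bigr)\cdot br(U).
\end{align*}
Since $\langle H(T,U)\rangle=br(T)^{t}\cdot\mathcal{M}\cdot br(U)$, it therefore suffices to prove the matrix identity $\Omega^{t}\,\mathcal{M}\,\Omega^{-1}=\mathcal{M}$, equivalently $\Omega^{t}\,\mathcal{M}=\mathcal{M}\,\Omega$: once this holds, the two bilinear forms agree for every choice of $T$ and $U$, which is exactly the assertion of the theorem.

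Next I would use the symmetry of $\mathcal{M}$ to cut down the computation. Because $h_{01}=h_{10}$, the matrix $\mathcal{M}$ is symmetric, so $(\mathcal{M}\,\Omega)^{t}=\Omega^{t}\,\mathcal{M}^{t}=\Omega^{t}\,\mathcal{M}$; hence $\Omega^{t}\,\mathcal{M}=\mathcal{M}\,\Omega$ holds precisely when $\mathcal{M}\,\Omega$ is a symmetric matrix. For a $2\times 2$ matrix symmetry is the single scalar condition that the $(1,2)$ and $(2,1)$ entries agree, so the entire theorem collapses to one Laurent-polynomial identity. A pleasant bonus of passing to the form $\Omega^{t}\,\mathcal{M}=\mathcal{M}\,\Omega$ is that $\Omega^{-1}$ never needs to be computed at all.

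The main obstacle is the verification of that remaining identity. Substituting $h_{00}=-(A^{-14}+A^{-6}+2A^{-2}+2A^{2}+A^{6}+A^{14})$, $h_{01}=h_{10}=\delta^{2}$, $h_{11}=\delta$, and the four entries of $\Omega$ from~\eqref{eq:omega}, the off-diagonal entries of $\mathcal{M}\,\Omega$ become products of Laurent polynomials whose exponents range from $A^{-14}$ up to $A^{14}$, so the expansion is long and cancellation-heavy. There is no conceptual difficulty here --- it is bookkeeping --- but it must be carried out carefully; the symmetry reduction above, which replaces a four-entry matrix equation by a single off-diagonal check and removes any need to invert $\Omega$, is what keeps the verification tractable by hand.
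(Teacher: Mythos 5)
Your proposal is correct, and its structural skeleton is exactly the paper's: both feed $br(T^{\omega})=\Omega\cdot br(T)$ and $br(U^{\bar{\omega}})=\Omega^{-1}\cdot br(U)$ into the bilinear formula $\langle H(T,U)\rangle=br(T)^{t}\cdot\mathcal{M}\cdot br(U)$ and reduce the theorem to the conjugation identity $\Omega^{t}\,\mathcal{M}\,\Omega^{-1}=\mathcal{M}$. Where you genuinely diverge is in how that identity gets verified. The paper computes $\Omega^{-1}$ explicitly (using $\det\Omega=-A^{-6}$), forms the product $\Omega^{t}\,\mathcal{M}$, and then multiplies by $\Omega^{-1}$ to recover $\mathcal{M}$ entry by entry --- four Laurent-polynomial entries at each of two stages. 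You instead rewrite the identity as $\Omega^{t}\,\mathcal{M}=\mathcal{M}\,\Omega$ (legitimate, since $\Omega$ is invertible over $\mathbb{Z}[A,A^{-1}]$, as its determinant is a unit --- invertibility is in any case already presupposed by $br(U^{\bar{\omega}})=\Omega^{-1}\cdot br(U)$) and observe that, because $h_{01}=h_{10}$ makes $\mathcal{M}$ symmetric, this equation says precisely that $\mathcal{M}\,\Omega$ is symmetric; for a $2\times 2$ matrix that is the single scalar condition $h_{00}\,b+h_{01}\,d=h_{01}\,a+h_{11}\,c$ on the entries $a,b,c,d$ of $\Omega$. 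That check does go through: both sides equal $A^{-13}-A^{-9}-A^{-5}-A^{-1}-A^{3}-A^{11}$, which is (reassuringly) the common off-diagonal entry the paper finds in $\Omega^{t}\,\mathcal{M}$ --- indeed the paper's intermediate product is visibly symmetric, a fact your argument explains rather than merely observes. What your route buys is economy: no inversion of $\Omega$, and one polynomial identity instead of eight; what the paper's route buys is that the full conjugate $\Omega^{t}\,\mathcal{M}\,\Omega^{-1}$ is displayed and seen to equal $\mathcal{M}$ outright, with no equivalences to justify. Both are complete proofs.
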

\begin{proof}
\begin{eqnarray*}
\langle H(T^{\omega},U^{\bar{\omega}}) \rangle & = & br(T^{\omega})^{t} \cdot \mathcal{M} \cdot  br(U^{\bar{\omega}})\\
              & = & (\Omega \cdot br(T))^{t}\cdot \mathcal{M} \cdot \Omega^{-1} \cdot br(U)\\
              & = & (br(T))^{t} \cdot \Omega^{t} \cdot \mathcal{M} \cdot \Omega^{-1} \cdot br(U)
\end{eqnarray*}
We now need to verify that $\Omega^{t} \cdot \mathcal{M} \cdot \Omega^{-1} = \mathcal{M}$, where
\begin{equation}
\mathcal{M} = \begin{pmatrix}
-(A^{-14}+A^{-6}+2A^{-2}+2A^{2}+A^{6}+A^{14}) & A^{-4}+A^{4}+2 \\ 
A^{-4}+A^{4}+2 & -A^{-2}-A^{2}
\end{pmatrix}
\end{equation}

Considering the matrix $\Omega$ in Equation  ~\eqref{eq:omega}

\begin{eqnarray*}
\Omega^{t} = \begin{pmatrix}
-A^{-1}+A^{3}-A^{7} & -A^{-11}+2A^{-7}-2A^{-3}+2A-A^{5} \\ 
A^{-3} & A^{-13}-A^{-9}+A^{-5}
\end{pmatrix} 
\end{eqnarray*}
\begin{eqnarray*}
\Omega^{-1} & = & -\frac{1}{A^{-6}} \begin{pmatrix}
 A^{-13}-A^{-9}+A^{-5} & -A^{-3} \\ 
A^{-11}-2A^{-7}+2A^{-3}-2A+A^{5} & -A^{-1}+A^{3}-A^{7}
\end{pmatrix} \\
            & = & \begin{pmatrix}
-A^{-7}+A^{-3}-A^{1} & A^{3} \\ 
-A^{-5}+2A^{-1}-2A^{3}+2A^{7}-A^{11} & A^{5}-A^{9}+A^{13}
\end{pmatrix} 
\end{eqnarray*}

Now by direct matrix multiplication we have that,
\begin{equation}
\Omega^{t} \cdot \mathcal{M} = \begin{pmatrix}
-A^{-11}+3A^{-7}+A^{-3}+2A+A^{5}+2A^{13}-A^{17}+A^{21} & A^{-13}-A^{-9}-A^{-5}-A^{-1}-A^{3}-A^{11} \\ 
A^{-13}-A^{-9}-A^{-5}-A^{-1}-A^{3}-A^{11} & -A^{-15}+A^{-7}+A^{-3}+A
\end{pmatrix}
\end{equation}

We therefore have that,
\begin{equation}
(\Omega^{t} \cdot \mathcal{M}) \cdot \Omega^{-1} = \begin{pmatrix}
-(A^{-14}+A^{-6}+2A^{-2}+2A^{2}+A^{6}+A^{14}) & A^{-4}+A^{4}+2 \\ 
A^{-4}+A^{4}+2 & -A^{-2}-A^{2}
\end{pmatrix} = \mathcal{M}
\end{equation}
and hence
\begin{eqnarray*}
\langle H(T^{\omega},U^{\bar{\omega}}) \rangle & = &   (br(T))^{t} \cdot \Omega^{t} \cdot \mathcal{M} \cdot \Omega^{-1} \cdot br(U)\\
                                               & = & (br(T))^{t} \cdot \mathcal{M} \cdot br(U)\\
                                               & = & \langle H(T,U) \rangle
\end{eqnarray*}

\end{proof}

\subsection{Thistlethwaite's Example}

The figure  ~\ref{Thistlethwaite} below is a version of a link discovered by Morwen Thistlethwaite \cite{a3b} in December $2000$. This is an example of a link which is linked but whose linking cannot be detected by the Jones polynomial.This fact can be verified using a computer program or a polynomial invariant like the Kauffman polynomial.

\begin{figure}[!h]
\centering 
\includegraphics[scale=0.5]{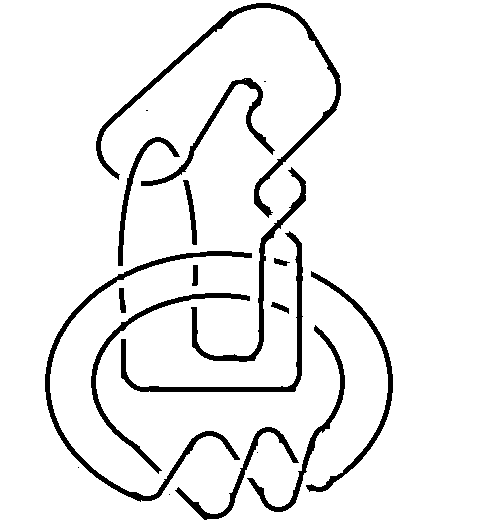}
\caption{Thistlethwaite's link}
\label{Thistlethwaite} 
\end{figure}

It interesting to note that we can verify that the Thistlethwaite's link fits into the structure we consider in this paper. This can be done by choosing tangle $T$ as $-1$ and tangle $U$ as $\infty + 2$ resulting in the figure ~\ref{t-unlinked}  below with writhe $-3$.

\begin{figure}[!h]
\centering 
\includegraphics[scale=0.5]{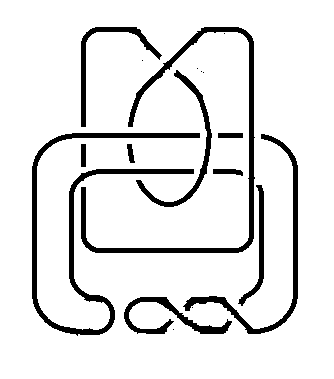}
\caption{Unlink with writhe $-3$}
\label{t-unlinked} 
\end{figure}

We perform the omega transformation on the tangles $T$ and $U$ and make use of regular isotopy as shown in ~\eqref{Tbar} and ~\eqref{Ubar}.

\begin{equation}\label{Tbar}
 \includegraphics[scale=0.2]{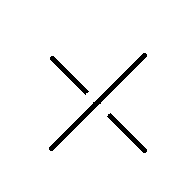}^{ \qquad \xrightarrow{\huge{\omega}}} \qquad \includegraphics[scale=0.2]{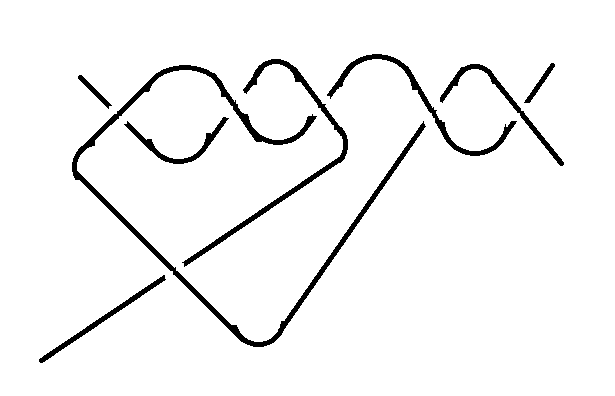} \qquad   \raisebox{1.9 em}{=}   \includegraphics[scale=0.25]{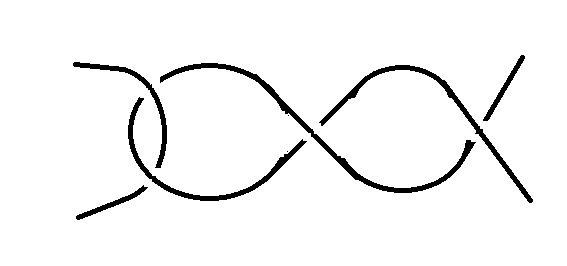}
\end{equation}
and
\begin{equation}\label{Ubar}
 \includegraphics[scale=0.3]{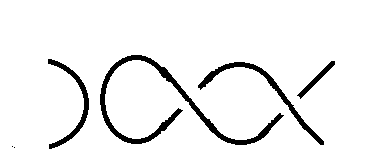}^{ \qquad \xrightarrow{\huge{\bar{\omega}}}} \qquad \includegraphics[scale=0.2]{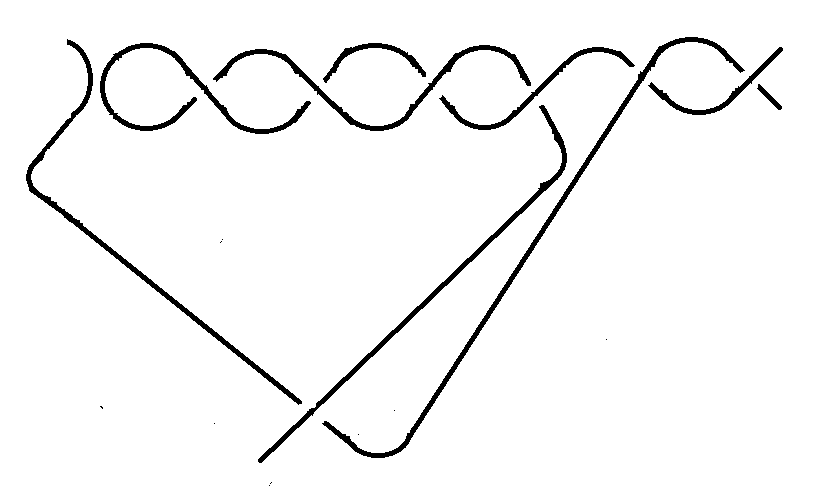} \qquad   \raisebox{1.9 em}{=}   \includegraphics[scale=0.3]{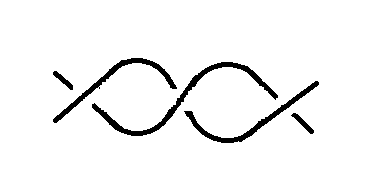}
\end{equation}

We therefore obtain the Thistlethwaite's link, figure  ~\ref{Thistlethwaite} by replacing the new tangles in figure   ~\ref{t-unlinked}. It is not difficult to see that the writhe of the Thistlethwaite's link is $-3$. This shows how the first example of Thistlethwaite fits the construction.

\subsection{A family of 2-component link with trivial Jones polynomial}

In this section we consider a sequence of 2-component links generated by $T = \infty - 2$ and $U = -T = \infty + 2$. The diagram $H(\infty - 2, \infty + 2)$ in figure ~\ref{generator}  is a 2-component link of writhe $0$; therefore a repeated application of the omega operation  on the tangle $T = \infty - 2$ yields a sequence of tangles $T_{0}=\infty - 2$ $T_{1}=3$, $T_{2}=5 \cdot 1 \cdot 2$, $T_{3}=5 \cdot 1 \cdot 4 \cdot 1 \cdot 2$,... such that $\langle H(T_{n},-T_{n})\rangle= \delta $ for all $n \geq 0$.

\begin{figure}[!h]
\centering 
\includegraphics[scale=0.4]{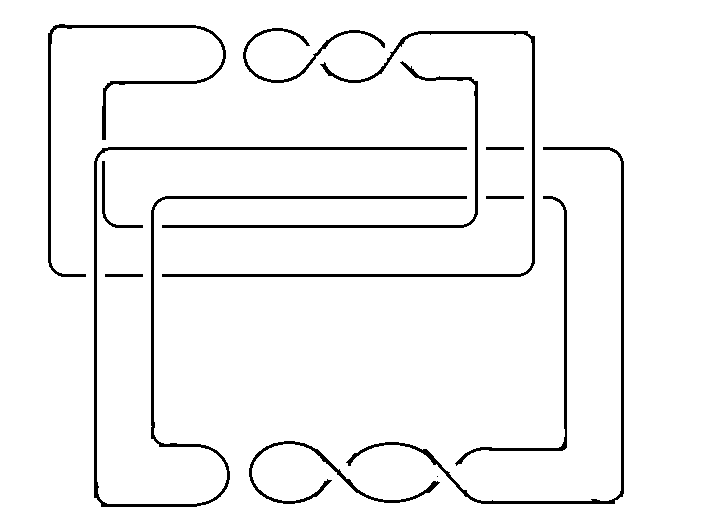}
\caption{$H(\infty - 2, \infty + 2)$}
\label{generator} 
\end{figure}

\begin{defn}[]
\label{defn:jwt}
$S(n)= H(T_{n}, -T_{n})$, where the tangle $T_{n}$ is the result of $n$ applications of the omega operation.
\end{defn}

It is easy to verify that the writhe of  $H(T_{n}, -T_{n})$ is zero for even $n$ and for odd $n$ the writhe is $\pm 8$, the sign depends on the choice of orientation.
\begin{thm}[]
\label{thm:jwt}
The Jones polynomial, $V_{S(n)}(t)= u$, for even $n$ and $V_{S(n)}(t)= t^{\pm 6}u$, for odd $n$ where the sign depending on the choice of orientation and $u= -t^{-\frac{1}{2}}-t^{\frac{1}{2}}$.
\end{thm}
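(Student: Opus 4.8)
The plan is to first show that the Kauffman bracket of $S(n)=H(T_n,-T_n)$ equals $\delta$ for every $n\ge 0$, and then to pass to the Jones polynomial through the writhe normalisation and the substitution $A=t^{-1/4}$. The bracket statement is an induction whose inductive step is nothing but Theorem \ref{thm:final}: if I can show that $S(n+1)$ is precisely $S(n)^{\omega}$, i.e. that replacing $T_n$ by $T_n^{\omega}$ and $-T_n$ by $(-T_n)^{\bar{\omega}}$ produces the pair $(T_{n+1},-T_{n+1})$, then Theorem \ref{thm:final} gives $\langle S(n+1)\rangle=\langle S(n)\rangle$ at once. Since $T_{n+1}=T_n^{\omega}$ holds by the definition of the sequence $T_n$, the whole inductive step reduces to the single identity $(-T)^{\bar{\omega}}=-(T^{\omega})$.

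I expect this identity to be the main obstacle, since it is where the mirror-image operation must be reconciled with the two twisted gluings $\omega$ and $\bar{\omega}$. I would verify it at the level of bracket vectors, writing $\overline{p}$ for the image of a Laurent polynomial $p$ under the substitution $A\mapsto A^{-1}$. First, reversing every crossing of a diagram sends its bracket $p(A)$ to $p(A^{-1})$ while fixing the crossingless tangles $0$ and $\infty$; hence $br(-T)=\overline{br(T)}$. Second, a term-by-term comparison of the matrix $\Omega$ of Equation \eqref{eq:omega} with the inverse $\Omega^{-1}$ computed inside the proof of Theorem \ref{thm:final} shows that $\overline{\Omega}=\Omega^{-1}$. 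Because $A\mapsto A^{-1}$ is a ring automorphism of $\mathbb{Z}[A,A^{-1}]$ it commutes with matrix products, so Proposition \ref{prop:jwt}(1) yields $br\big((-T)^{\bar{\omega}}\big)=\Omega^{-1}br(-T)=\overline{\Omega}\,\overline{br(T)}=\overline{\Omega\,br(T)}=\overline{br(T^{\omega})}=br\big(-(T^{\omega})\big)$, which is the required identity at bracket level and is all that the bracket computation needs.

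For the base case I would read off the two bracket vectors from Proposition \ref{prop:numerator}(2). Starting from $br(\infty)=\binom{0}{1}$ together with the known $br(2)=\binom{A^{2}}{1-A^{-4}}$ and its mirror $br(-2)=\binom{A^{-2}}{1-A^{4}}$, the sum formula collapses to $br(\infty-2)=\binom{0}{A^{6}}$ and $br(\infty+2)=\binom{0}{A^{-6}}$. Feeding these into Proposition \ref{prop:jwt} and using that $\mathcal{M}$ has $(2,2)$-entry $h_{11}=\delta$, the product $br(\infty-2)^{t}\,\mathcal{M}\,br(\infty+2)$ retains only $A^{6}\,h_{11}\,A^{-6}=\delta$, so $\langle S(0)\rangle=\delta$.

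Combining these pieces, the induction gives $\langle S(n)\rangle=\delta$ for all $n$. (Equivalently one may bypass the geometric identity and argue purely with matrices: $\langle S(n)\rangle=br(T_0)^{t}(\Omega^{t})^{n}\,\mathcal{M}\,\Omega^{-n}br(-T_0)$, and iterating the relation $\Omega^{t}\mathcal{M}\Omega^{-1}=\mathcal{M}$ established inside Theorem \ref{thm:final} reduces this to $br(T_0)^{t}\mathcal{M}\,br(-T_0)=\delta$.) Finally I invoke the stated writhe values in $X_{S(n)}=(-A)^{-3w}\langle S(n)\rangle$: for even $n$ the writhe is $0$, giving $X_{S(n)}=\delta=-A^{2}-A^{-2}$, while for odd $n$ the writhe is $\pm 8$, giving $X_{S(n)}=(-A)^{\mp 24}\delta=A^{\mp 24}\delta$ because $-A$ is raised to an even power. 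Substituting $A=t^{-1/4}$ turns $\delta$ into $-t^{-1/2}-t^{1/2}=u$ and $A^{\mp 24}$ into $t^{\pm 6}$, so $V_{S(n)}(t)=u$ for even $n$ and $V_{S(n)}(t)=t^{\pm 6}u$ for odd $n$, as claimed.
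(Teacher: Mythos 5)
Your proposal is correct, and its closing step---$X_{S(n)}=(-A)^{-3w(S(n))}\langle S(n)\rangle$ with $w=0$ for even $n$ and $w=\pm 8$ for odd $n$, followed by the substitution $A=t^{-1/4}$ turning $\delta$ into $u$ and $A^{\mp 24}$ into $t^{\pm 6}$---is precisely the paper's entire proof of the theorem. The difference is one of scope: the paper proves only that normalization, taking $\langle S(n)\rangle=\delta$ for granted from the preceding prose, where it is asserted as a consequence of the omega operation, while you actually establish it. In doing so you close a gap the paper silently steps over: Theorem \ref{thm:final} compares $H(T,U)$ with $H(T^{\omega},U^{\bar{\omega}})$, so for the sequence to stay inside the family $H(T_n,-T_n)$ one needs $(-T)^{\bar{\omega}}=-(T^{\omega})$, and you correctly identified this as the crux. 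Your bracket-level verification is sound: mirroring swaps $A\leftrightarrow A^{-1}$ and fixes $0$ and $\infty$, so $br(-T)=\overline{br(T)}$, and a term-by-term check of Equation \eqref{eq:omega} against the $\Omega^{-1}$ computed inside the paper's proof of Theorem \ref{thm:final} (using $\det\Omega=(\det M_{+})^{4}\det M_{*}=-A^{-6}$) confirms $\overline{\Omega}=\Omega^{-1}$. Worth noting, though, that there is a one-line geometric alternative you could have used instead: taking the mirror image negates every crossing and commutes with tangle sum and product, so the mirror of $T^{\omega}=(T+2)\cdot 1\cdot 2$ is $((-T)-2)\cdot(-1)\cdot(-2)=(-T)^{\bar{\omega}}$ directly from the definitions, with no matrix computation. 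Your base case, $br(\infty-2)=(0,A^{6})^{t}$ and $br(\infty+2)=(0,A^{-6})^{t}$ via Proposition \ref{prop:numerator}, giving $\langle S(0)\rangle=A^{6}h_{11}A^{-6}=\delta$ by Proposition \ref{prop:jwt}, is also correct and likewise absent from the paper, as is your clean matrix reformulation iterating $\Omega^{t}\mathcal{M}\Omega^{-1}=\mathcal{M}$. The one item you accept without proof---the writhe values $0$ and $\pm 8$---is exactly what the paper also leaves as an assertion, so your proposal is, if anything, more complete than the text it was written against.
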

\begin{proof}
For even $n$; writhe of $S(n)$ is zero.
\begin{eqnarray*}
V_{S(n)}(t) & = & (-t^{\frac{1}{4}})^{-3 \cdot 0}u \\
            & = & u
\end{eqnarray*}
and for odd $n$; writhe of $S(n)$ is $\pm 8$.
\begin{eqnarray*}
V_{S(n)}(t) & = & (-t^{\frac{1}{4}})^{-3 \cdot \pm 8}u \\
            & = & (-t)^{\pm 6}u\\
            & = &  t^{\pm 6}u
\end{eqnarray*}
\end{proof}
\begin{figure}[!ht]
    \subfloat[$S(1)$\label{subfig-1:dummy}]{%
      \includegraphics[scale=0.4]{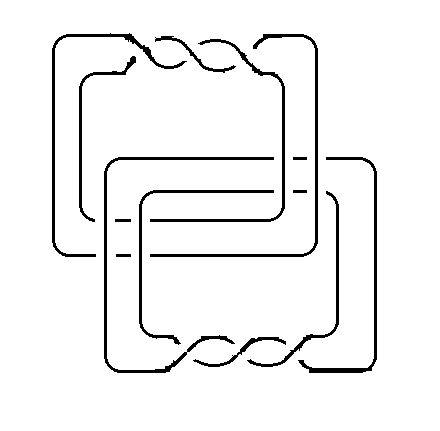}
    }
    \hfill
    \subfloat[$S(2)$\label{subfig-2:dummy}]{%
      \includegraphics[scale=0.4]{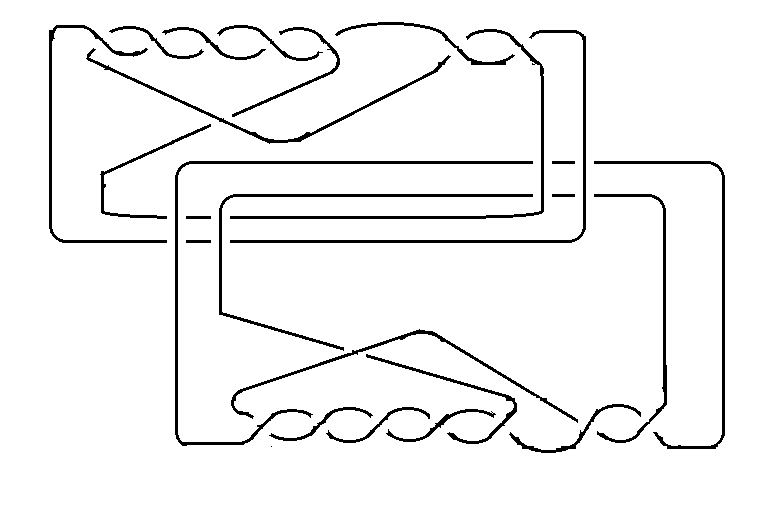}
    }
    \hfill
    \subfloat[$S(3)$\label{subfig-2:dummy}]{%
      \includegraphics[scale=0.4]{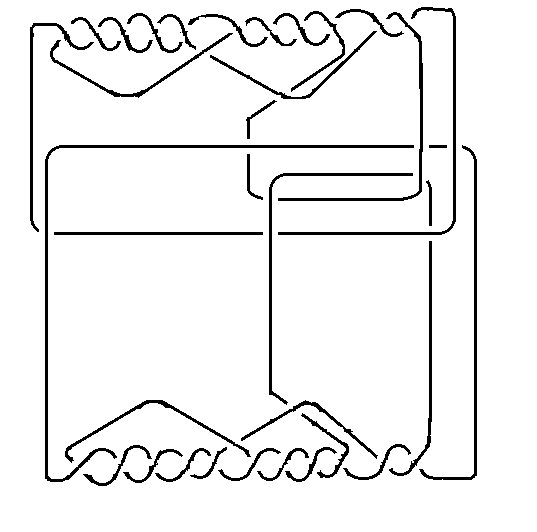}
    }
    \caption{The first three terms of the sequence, $S(n)$. }
    \label{Sequence}
  \end{figure}
  
 \newpage 
  The sequence $S(n)$ for positive even $n$ is therefore an infinite sequence of pairwise distinct non-trivial links whose linking cannot be detected by the Jones polynomial.
  
\bibliographystyle{plain}

\begin{thebibliography}{10}


\bibitem{b1} Collin C. Adams , \emph{The Knot Book},
W.H Freeman and Company; (1994).

\bibitem{a9}\label{some label - optional} D. Rolfsen, 
\emph{The quest for a knot with trivial {J}ones polynomial : diagram surgery and the {T}emperley-{L}ieb algebra,Topics in knot theory},
NATO Advanced Studies Institute Series C \textbf{399} (1993), 195--210.


\bibitem{a6}\label{some label - optional} J.W Alexander, 
\emph{Topological invariants of knots and links},
 Transactions of the American Mathematical Society \textbf{30} (1928), 275--306.
 
 \bibitem{a8}\label{some label - optional} K. Murasugi, 
\emph{The {J}ones polynomial and classical conjectures in knot theory},
Topology  \textbf{26} (1987), 187--194.


\bibitem{b3} Louis  H.  Kauffman, \emph{On {K}nots},
Princeton University Press, NJ; (1987).

\bibitem{b4}  Louis  H.  Kauffman , \emph{Formal {K}not {T}heory},
Princeton University Press, NJ; (1983).


\bibitem{b6} Louis H. Kauffman and Sostenes Lins , \emph{Temperley-{L}ieb {R}ecoupling {T}heory and {I}nvariants of 3-{M}anifolds},
Princeton University Press, NJ; (1994).


\bibitem{a12} Louis  H.  Kauffman, 
\emph{Notes on {D}ouble-{S}tranded {K}nots and {L}inks}, 1985,unpublished.
Available at \texttt{http://homepages.math.uic.edu/~kauffman/DblStrands.pdf}.

\bibitem{a1}\label{some label - optional} Louis  H.  Kauffman  , 
\emph{Knot {D}iagramatics-{H}and book of {K}not {T}heory},
 Elsevier Pub.,Amsterdam \textbf{} (2005), 233-318.


\bibitem{a3}\label{some label - optional} Louis  H.  Kauffman  , 
\emph{New Invariants in the theory of Knots},
 Amer. Math. Monthly \textbf{95} (1988), 85--97.


\bibitem{a7}\label{some label - optional} Louis H. Kauffman, 
\emph{State models of the {J}ones polynomial},
Topology  \textbf{26} (1987), 395--407.

\bibitem{a11}\label{some label - optional} Louis H. Kauffman, 
\emph{An invariant of regular isotopy},
Transactions of the American Mathematical Society \textbf{318} (1990), 417--471.

 
 \bibitem{b6} Louis H. Kauffman and Sostenes Lins , \emph{Temperley-{L}ieb {R}ecoupling {T}heory and {I}nvariants of 3-{M}anifolds},
Princeton University Press, NJ; (1994).


\bibitem{a3b}\label{some label - optional} Morwen Thistlethwaite, 
\emph{Links with trivial {J}ones Polynomial},
JKTR \textbf{10} No.1 (2001), 641--643.

\bibitem{b5}Richard H. Cromwell and Ralph H. Fox, \emph{Introduction to knot theory},
Springer-Verlag; (1963).

\bibitem{b2} Robert Messer and Philip Straffin , \emph{Topology Now},
The Mathematical Association of America; (2006).

\bibitem{a2}\label{some label - optional} S. Eliahou, Louis H. Kauffman and Morwen B. Thistlethwaite, 
\emph{Infinite Families of Links with trivial {J}ones Polynomial},
 Topology \textbf{42} (2003), 155-169.


\bibitem{a5}\label{some label - optional} V.F.R Jones, 
\emph{A polynomial invariant for knots via von Neumann algebras},
Bulletin of the American Mathematical Society \textbf{12} (1985), 103--111.

\bibitem{a10}\label{some label - optional} V.F.R Jones, 
\emph{A polynomial invariant for knot and von {N}eumann algebras},
 Notices American Mathematical Society \textbf{33} (1986), 218--225.
 


\end{thebibliography}

\end{document}